\documentclass[titlepage,12pt]{article}

\usepackage{array}
\usepackage[reqno]{amsmath}
\usepackage{xcolor}

\usepackage{amsthm}

\theoremstyle{plain}
\newtheorem{proclaim}{PROCLAIM}[section]
\newtheorem{theorem}[proclaim]{Theorem}
\newtheorem{lemma}[proclaim]{Lemma}

\newtheorem{corollary}[proclaim]{Corollary}

\theoremstyle{definition}
\newtheorem{definition}[proclaim]{Definition}

\newtheorem{assumption}[proclaim]{Assumption} 

\usepackage{graphicx}
\usepackage{hyperref}  
\usepackage{booktabs}
\usepackage{tikz}
\usetikzlibrary{shapes.geometric, arrows.meta, positioning}

\usepackage{mathtools}
\usepackage{enumitem}

\usepackage[
	paper=letterpaper,
	margin=1in,
	headheight=13.6pt,
	includehead,
	includefoot
]{geometry}

\newcommand{\cE}{{\cal E}}
\newcommand{\reals}{{I\kern-.35em R}}
\newcommand{\Reals}{\overline{I\kern-.35em R}}
\newcommand{\lset}{\big\lbrace}
\newcommand{\mset}{{\,\big\vert\,}}
\newcommand{\rset}{\big\rbrace}
\newcommand{\Lset}{\Big\lbrace}

\newcommand{\Rset}{\Big\rbrace}

\newcommand{\Ex}{{I\kern-.35em E}}
\newcommand{\bfxi}{\mbox{\boldmath $\xi$}}
\newcommand{\ball}{{I\kern -.35em B}}
\def\state #1. { \noindent{\bf#1.\enspace}}
\newcommand{\eop}{\hfill{$\vcenter{\hrule height1pt \hbox{\vrule width1pt height5pt\kern5pt \vrule width1pt} \hrule height1pt}$} \medskip}

\newcommand{\upto}{{\raise 1pt \hbox{$\scriptstyle \,\nearrow\,$}}}
\newcommand{\cB}{{\cal B}}

\newcommand{\nargmaxinf}{\mathop{\rm argmaxinf}\nolimits}
\newcommand{\nats}{{I\kern -.35em N}}
\newcommand{\lopto}{\,{\lower 1pt\hbox{$\longrightarrow$}}\kern -16pt
	\hbox{\raise 4pt \hbox{$\,\scriptstyle lop$}}\hskip6pt}

\newcommand{\argmax}{\mathop{\rm argmax}}
\newcommand{\argmin}{\mathop{\rm argmin}}
\newcommand{\dom}{\operatorname{dom}}

\linespread{1.25} 

\begin{document}

		\begin{titlepage}
			\vglue 0.5cm
			\begin{center}
				\begin{large}
					{\bf Solving equilibrium problems in economies with financial markets, home production, and retention}
					\smallskip
				\end{large}
				\vglue 1.truecm
				\begin{tabular}{c}
					\begin{large} {\sl Julio Deride
					} \end{large}
					\\
					Faculty of Engineering and Sciences\\
					Universidad Adolfo Ibáñez\\
					julio.deride@uai.cl
				\end{tabular}
			\end{center}
			\vskip 0.25truecm
			\noindent
			{\bf Abstract}.\quad We propose a new methodology to compute equilibria for
			general equilibrium problems in exchange economies with real financial markets,
			home production, and retention. We demonstrate that equilibrium prices can be
			determined by solving a related maxinf-optimization problem.
			We incorporate the non-arbitrage condition for financial markets
			into the equilibrium formulation and establish the equivalence between solutions
			to both problems. This reduces the complexity of the original problem by eliminating the
			need to directly compute financial contract prices, allowing us to calculate
			equilibria even in cases of incomplete financial markets.
			
			We also introduce a \emph{Walrasian} bifunction that
			captures market imbalances and show that the maxinf-points of this function correspond to
			equilibrium points. Moreover, we demonstrate that every equilibrium point can be
			approximated by a limit of maxinf-points for a family of perturbed problems, relying
			on the notion of \emph{lopsided convergence}.
			
			Finally, we propose an \emph{augmented Walrasian} algorithm and present
			numerical examples to illustrate the effectiveness of this approach. Our
			methodology allows for the efficient calculation of equilibria in a variety of exchange
			economies and has potential applications in finance and economics.	
			\vskip 0.25truecm
			\halign{&\vtop{\parindent=0pt
					\hangindent2.5em\strut#\strut}\cr
				{\bf Keywords}: Walras equilibrium, stochastic equilibrium,
				lopsided convergence, general equilibrium theory, augmented Walrasian, incomplete markets, GEI, no-arbitrage.
				\hfill\break\cr
				{\bf JEL Classification}: C680, D580, C620 \cr\cr
				{\bf Date}: \today \cr}
		\end{titlepage}

\section{Introduction}\label{geiintro}
		
		In this paper, we investigate the computational challenges of solving general equilibrium problems for economies with real asset markets, home production, and retention, including the case when financial markets are incomplete. We follow the model in \cite{JoRoWe14gei}, where economic equilibrium for incomplete financial markets in general real assets is developed in a new formulation with currency-denominated prices. This model is an extension of the classic Walrasian economic equilibrium model presented by \cite{debreu1959theory,ArDe54}, where agents optimize individually under perfect competition, and markets clear. In particular, we consider an inter-temporal economy, where agents face uncertainty over the future and can make financial decisions by accessing a collection of real assets. Each agent also has access to domestic technologies to transform goods between periods and, in addition, has the possibility of deciding to retain goods. In this setting, we focus our attention on the design of a methodology for finding equilibrium prices, following an optimization approach to the equilibrium problem.
		
		Solving general equilibrium problems requires numerical methods to solve optimization problems that, in more realistic models, contain utility functions or a financial market structure leading to computational difficulties, such as nonconvexities, nonsmoothness, and unboundedness.
		The classic solution strategies rely heavily on stringent assumptions,
		such as interior initial endowments, differentiability, and constraint qualifications,
		in order to give a characterization of the equilibrium points using a differential topology
		framework. We propose an algorithm that works under weaker assumptions and that
		follows a different approach from classical methods, relying on sequentially solving
		optimization problems.
		
		The first computational methods presented to solve pure exchange
		economy equilibrium problems were constructed by using a characterization
		of equilibrium prices as the solutions to a fixed-point problem. The most
		relevant solution strategies can be summarized in four main areas: \emph{Simplicial
			methods}, proposed by Scarf \cite{ScaHan73}, based on a fixed-point description
		of the problem, followed by an enumerative argument exploiting the
		geometry of the simplex of prices; \emph{Global Newton methods},
		where the equilibrium problem is presented as a dynamical system for the
		price vector, and it is solved by Newton's iterations; \emph{Homotopy methods},
		where the equilibrium state is seen as the solution of an extended system of
		nonlinear equations describing optimality and market clearing conditions. These methods are the most popular, and
		the solution strategy in this case is based on homotopy methods that solve
		iteratively a sequence of simpler problems that are homotopic to the original one
		and are especially developed for models that incorporate incomplete financial markets
		(see \cite{Sai83homo} for homotopy methods; \cite{Gea90gei,won2019new} for GEI and pre-GEI models; \cite{zhan2021smooth,zhan2021determination} for a smooth homotopy for GEI;
		\cite{Eav11homo,BrDeEa96gei,DeMEav96gei,Ea99,HeSch06,HeKu02,Ma15compgei}
		for computational approaches; and \cite{garg2015complementary} for a
		Lemke's scheme with separable piecewise linear concave utility functions).
		Finally, \emph{Optimization methods} represent equilibrium prices
		as the (direct) solution points of an optimization problem, solved by an interior point
		method \cite{BrKu08} or by an augmented Lagrangian method \cite{Sch13computing}
		(some references and details are omitted for simplicity of exposition).
		Also, a family of methods based on tools from Variational Analysis and generalized
		equations has been proposed, for example, solving the equilibrium problem via a
		virtual market coordinator and epigraphical convergence \cite{borges2021decomposition},
		a generalized Nash game (GNEP) for risk-averse stochastic equilibrium problems
		\cite{luna2016approximation}, and the stochastic variational scheme presented in \cite{MILASI2021125243,DONATO20181353}.
		
		We propose a solution strategy that is based on the correspondence between
		equilibrium prices and the (local) saddle points of the Walrasian bifunction.
		This idea extends our previous work on equilibrium problems \cite{DeJoWe16awlrs},
		where our approach exhibits an alternative with theoretical advantages in terms of stability and
		satisfactory performance in numerical analysis. We now incorporate a broader class
		of economic models, where the inclusion of financial markets,
		home production, and retention is considered. Moreover, we provide a
		convergence proof for our algorithm based on the notion of \emph{lopsided
			convergence} for bifunctions and design a version of our augmented
		Walrasian algorithm for this class of economies.
		
		This article is organized as follows: in Section~\ref{sec:agprob}, we provide
		a mathematical description of the equilibrium model with financial markets,
		home production, and retention. In particular, we describe the agent's
		optimization problem and characterize the financial markets.
		In Section~\ref{sec:geieq}, we study equilibrium conditions and the
		formulation of the problem of the maxinf optimization type. Additionally,
		we present our approximation approach, along with our solution method.
		Finally, Section~\ref{geinum} illustrates our algorithm's applicability,
		including an example of its computational implementation and our results.
		
		\section{General equilibrium with financial markets model}\label{sec:agprob}
		
		The model described in this section corresponds to an extension of the
		general equilibrium model with real financial markets and retainability described
		in \cite{JoRoWe14gei}, where we incorporate home production. This addition
		enriches the model without adding technical complications. Hence,
		the original model, in its form, could have included it by simply expanding the notation.
		Essentially, we consider a two-stage pure exchange economy where agents
		decide on consumption and financial assets, and where second-stage outcomes are uncertain.
		
		We denote the set of real numbers by $\reals$, the set of extended real numbers by $\Reals=\reals\cup\{-\infty,+\infty\}$, and the $m$-dimensional Euclidean space by $\reals^m$. We denote the ball centered at $\bar{x}\in\reals^m$ with radius $r>0$ by $\ball_m(\bar{x},r)=\lset x\in\reals^m\,\mset\,|x-\bar{x}| < r\rset$. We denote by $\ball_m=\ball_m(0,1)$. Recall the definition of the inner product as $\langle x,y\rangle =\sum_{i=1}^m x_iy_i$, for $x,y\in\reals^m$. For $V\subset \reals^m$, we define the linear span of $V$ as the set of all linear combinations of the vectors in $V$, denoted by $\langle V\rangle$.
		
		We say that a function $f:\reals^m\to\Reals$ is upper semi-continuous (usc) at a point $\bar{x}\in\reals^m$ if $\limsup_{x\to\bar{x}} f(x)\leq f(\bar{x})$. If $f$ is usc at every point in $\reals^m$, we say that it is usc. We also define the domain of $f$ as $\dom(f)=\lset x\in\reals^m\,\mset\,f(x)<+\infty\rset$. Additionally, we define the set $\argmax f=\lset x^\prime\in\dom f\,\mset\,f(x^\prime)\geq \sup f\rset$.

		\subsection{Economy description}
		Consider an economy $\cE$ with a finite number of heterogeneous agents,
		indexed by $i=1,\ldots,I$. They trade goods in a market operating in two
		stages, where the present or first stage is denoted by
		$\xi=0$, and there is uncertainty about the outcomes in the second stage,
		where a finite number of possible scenarios is considered, indexed by $\xi=1,\ldots,\Xi$.
		Goods or commodities, as in \cite{JoRoWe14gei}, are \emph{generalized goods}\footnote{They
			can be anything tradable in markets that enters each state of the economy in fixed
			supply within the agents' holdings \cite[p.311]{JoRoWe14gei}.} and can be
		perishable or durable. We consider a finite number of commodities, indexed
		by $l=0,\ldots,L$. Thus, the space of commodities is given by $\reals_+^{(1+L)\times(1+\Xi)}$.
		
		Agents $i=1,\ldots,I$ are endowed with an initial bundle of goods
		$e^i=(e^i_0,e^i_1,\ldots,e^i_\Xi)$, and they trade their holdings,
		optimizing their preferences over goods. These preferences are modeled by a utility function that takes into consideration the inter-temporal decision under uncertainty
		regarding the possibility of consumption and retention. Retainability of a particular
		good reflects the desirability of buying it for possible later enjoyment, and retention can
		occur in both stages. Thus, the utility function that captures these
		preferences is represented by a function of two variables: consumption, denoted
		by $c^i=(c^i_0,c^i_1,\ldots,c^i_\Xi)$, and retention, $w^i=(w^i_0,w^i_1,\ldots,w^i_\Xi)$.
		Moreover, agents have at their disposal the bundle of goods
		that they decided to retain in the first stage (if any), where its availability depends
		on the second-stage scenario and is modeled by matrix multiplication.
		Specifically, if an agent retains $w^i_0$ at the first stage, then they have
		an amount $A_\xi w^i_0$ available in scenario $\xi$ of the second stage.
		An example of this is wine consumption: it can be
		consumed in the first period or retained for the next. In a desirable outcome, a lucky consumer gets better wine; in an undesirable outcome, it can turn into vinegar. Another example is art: buying a painting from an unknown artist can be enjoyed for its own merits, but if that artist becomes the next Michelangelo, the retained painting becomes more valuable.
		
		Denote this utility function by $U^i:\reals_+^{(1+L)\times(1+\Xi)}\times\reals_+^{(1+L)\times(1+\Xi)}\to\Reals$,
		and assume it is usc, with a \emph{survival set} given by $C^i=\dom U^i$. For example,
		we consider agents that are maximizing their expected utility, i.e., their objective
		function takes the following form
		\[U^i(c,w)=u^i(c_0,w_0)+\Ex\{u^i(c_{\bfxi},w_{\bfxi})\},\]
		for $u^i:\reals_+^{(1+L)}\times\reals_+^{(1+L)}\to\Reals$.
		
		In this economy, agents have access to \emph{home production} technology,
		described as a process consisting of a collection of activities
		that requires a bundle of goods as input and produces a (stochastic) bundle
		of output goods in the second stage. This procedure is modeled by a
		decision variable $y$ belonging to a set of $m$-acceptable activities, and a
		pair of input/output matrices $(T^i_0,\{T^i_{\bfxi}\})$ at the disposal of agent $i$.
		Thus, by using $T^i_0 y$ resources, a random output $T^i_{\bfxi} y$
		is obtained in the second stage. We assume that the set of acceptable activities, $Y^i\subset\reals^m_+$, is a closed convex cone, and it includes
		the possibility of no-action, i.e., $0\in Y^i$. One possible interpretation
		for the matrices determining input/output (home production)
		is to consider it as a process that could represent savings (including
		enhancements or deterioration), investment activities, retention, and so on.
		
		The financial market structure considered in this economy
		is detailed as follows. Agents have access to a finite number of real contracts,
		indexed by $j=0,\ldots,J$, where each contract has an
		associated return vector of goods, depending on the second-stage outcome. To fix notation, for each unit of the $j$-th marketed asset, the stochastic return delivered is described by a bundle of goods denoted $D_{j,\bfxi}$.
		We are not imposing any explicit restrictions on financial transactions happening
		simultaneously. However, additional provisions for each transaction
		(real transaction costs) are modeled by an associated cost vector of
		goods, $D_0$, for every unit of a contract that is issued (sold).
		
		Under this market characterization, and without providing any further
		description of the uncertainty, the market information structure is
		organized as a tree, as depicted in Figure~\ref{fig:geistr}.
		
		\begin{figure}[!hbt]
			\begin{center}
				\begin{tikzpicture}[
					node distance=2.5cm and 1.5cm,
					state/.style={
						rectangle,
						draw=black!20,
						fill=black!5,
						text width=3.2cm,
						align=center,
						minimum height=1.25cm,
						font=\scriptsize
					},
					edge_style/.style={-Stealth, draw=black!70, thick}
					]
					
					\node[state] (xi0) {
						$\xi=0$ \\
						$(c_{0},w_{0},y,z)$ \\
						$-D_{0}z,-T_{0}y$
					};
					
					\node[state, above right=0.2cm and 2cm of xi0] (xi1) {
						$\xi=1$ \\
						$(c_{1},w_{1})$ \\
						$D_{1}z,A_{1}w_{0},T_{1}y$
					};
					
					\node[state, right=0.2cm and 2cm of xi0] (xi2) {
						$\xi=2$ \\
						$(c_{2},w_{2})$ \\
						$D_{2}z,A_{2}w_{0},T_{2}y$
					};
					
					\node[font=\huge, below=0.1cm of xi2, text=black!60] (dots) {\vdots};
					
					\node[state, below right=1.2cm and 2cm of xi0] (xiXi) { 
						$\xi=\Xi$ \\
						$(c_{\Xi},w_{\Xi})$ \\
						$D_{\Xi}z, A_{\Xi}w_{0}, T_{\Xi}y$
					};
					
					\draw[edge_style] (xi0) -- (xi1);
					\draw[edge_style] (xi0) -- (xi2);
					\draw[edge_style] (xi0) -- (xiXi); 
					
				\end{tikzpicture}
				\caption{\small Market structure of the GEI model. First-stage decision variables for consumption, retention, home production, and financial portfolio are summarized by $(c_0,w_0,y,z)$, respectively, along with the associated transaction costs of $-D_0z$ and home production input $-T_0y$. In the second stage (for each scenario $\xi$), consumption and retention variables are $(c_\xi,w_\xi)$; financial, retainability, and production outcomes are represented by $(D_\xi z,A_\xi w_0, T_\xi y)$.}
				\label{fig:geistr}
			\end{center}
		\end{figure}
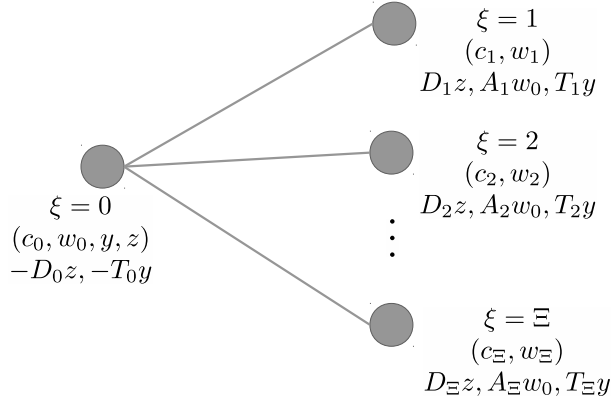
		
		Prices are the final element needed to fully describe the economy.
		Goods and contracts are traded in a perfectly competitive market setting.
		We denote the market prices of goods by
		$p=(p_0,p_1,\ldots,p_\Xi)\in\reals_+^{(1+L)\times(1+\Xi)}$,
		and the market prices of financial contracts by
		$q=(q_0,q_1,\ldots,q_J)\in\reals_+^{1+J}$.
		Hence, under perfect competition, agents are price-takers. Given
		prices $p$ and $q$, each agent $i$ chooses an affordable bundle of
		consumption goods $c^i$, a retention strategy $w^i$, and a financial decision
		(given by a portfolio strategy $z^i=z^{i,+}-z^{i,-}$) that maximizes their utility
		function. Thus, each agent solves the following optimization problem:
		\begin{align*}
			\max_{\{c,w,y,z^+,z^-\}}&\;U^i\left(c,w\right),\\
			{\rm s.t.}&\;\langle p_0,c_0+w_0+D_0z^-+T^i_0y\rangle+\langle q, z^+\rangle \leq \langle p_0,e^i_0\rangle+\langle q,z^-\rangle,\\
			&\langle p_\xi,c_\xi+w_\xi+D_\xi z^-\rangle \leq \langle p_\xi,e^i_\xi+D_\xi z^++A^i_\xi w_0+T^i_\xi y\rangle,\quad\xi=1,\ldots,\Xi,\\
			&(c,w)\in C^i,\quad z^+,z^-\in\reals^{1+J}_+,\quad y\in Y^i.
		\end{align*}
		
		Let $(c^i,w^i,y^i,z^{+,i},z^{-,i})$ be a solution to this problem (which depends on $p$
		and $q$). To simplify notation, we define the \emph{individual excess supply} as
		\begin{equation}\label{sdef}
			\begin{cases}
				s^i_0&=e^i_0-c^i_0-w^i_0-D_0z^{-,i}-T^i_0y^i,\\
				s^i_\xi&=e^i_\xi+D_\xi(z^{i,+}-z^{i,-})+A^i_\xi w^i_0+T^i_\xi y^i-c^i_\xi-w^i_\xi,\quad\xi=1,\ldots,\Xi.
			\end{cases}
		\end{equation}
		
		For this economy, we define the \emph{excess supply function} as
		the difference between aggregate supply and aggregate demand.
		In our case, it is defined as follows:
		\[ {\rm ES}_\xi=\sum_{i=1}^I s^i_\xi,\quad\xi=0,\ldots,\Xi.\]
		
		Given the excess supply functions $\{{\rm ES}_\xi:\xi=0,\ldots,\Xi\}$, we define an \emph{equilibrium}
		for the economy as a price vector $(p,q)$ such that this family of functions is non-negative:
		\[{\rm ES}_\xi \geq 0,\quad\xi=0,\ldots,\Xi.\]
		That is, at equilibrium, the market can satisfy the aggregate demand
		for goods with the available aggregate supply. Furthermore, we
		require that at this price vector, the financial contract market clears:
		\[\sum_i z^{+,i}=\sum_i z^{-,i},\]
		where the total amount of financial contracts issued equals
		the number of contracts purchased, or aggregate short
		positions equal aggregate long positions.
		
		Before we proceed with further details of the model, we state
		one assumption regarding the \emph{desirability} of the available
		goods in the economy, and another assumption regarding
		\emph{survivability} of the agents.
		
		\begin{assumption}{\rm (Indispensability)}\label{ass:indisp}
			There is a good that is indispensable to all agents. Every good
			is indispensable to at least one agent.
		\end{assumption}
		This assumption is weaker than the classic strong monotonicity
		assumption in \cite{MQ2002}, as it only reinforces the indispensability
		of the same good for every agent and guarantees that the goods traded
		in the economy are actually desired by at least one agent.
		Denote the indispensable good for all agents by the first indexed good ($l=0$).
		We will use this good as the \emph{numéraire} for this economy in
		every scenario. Therefore, assume that $p_{\xi,0}=1$ for $\xi=0,\ldots,\Xi$
		(a different approach can be followed by imposing that
		$p_\xi\in\Delta_{L+1}$ for $\xi=0,\ldots,\Xi$\footnote{$\Delta_m$ corresponds to the $m$-dimensional canonical simplex.}). Another advantage of this
		approach is that it helps us to avoid indeterminacy of the price system \cite{patinkin1949indeterminacy}
		and, therefore, eases the comparison between prices for different stages.
		
		Additionally, the stochastic nature of the agents' problems necessitates assumptions regarding the survivability condition or market participation. In line with the approach in \cite{JoRoWe14gei}, we adopt the following assumption:
		
		\begin{assumption}{\rm (Ample survivability assumption)}\label{ass:ample}
			For every agent $i$ and for all possible second-stage scenarios $\xi=1,\ldots,\Xi$, there exist
			a consumption bundle $(\tilde c^i,\tilde w^i)\in C^i$ and an activity level
			$\tilde y\in Y^i$ such that
			\begin{align*}
				e^i_0-\tilde{c}^i_0-\tilde{w}^i_0-T^i_0\tilde{y}&\geq 0,\\
				e^i_\xi-\tilde{c}^i_\xi-\tilde{w}^i_\xi+T^i_\xi\tilde{y}+A_\xi\tilde{w}^i_0&\geq 0,\quad\xi=1,\ldots,\Xi.
			\end{align*}
		\end{assumption}
		Mathematically, these conditions correspond to the relatively complete recourse property, and they guarantee the feasibility of each agent's optimization problem. From the economic perspective, this assumption provides
		a minimum requirement for each agent to be able to engage in trading
		and participate in the economy, independently of the market prices.

		\subsection{Financial market and no-Arbitrage condition}
		We base our equilibrium approach on prices that satisfy the
		\emph{no-arbitrage condition} to rule out the existence of
		arbitrage opportunities in the financial markets (see, for example,
		\cite[Ch.\,5,\S3.3]{Kir98ge}). Simply put, this condition
		relates the prices of financial contracts and their associated returns.
		A trading strategy that delivers a positive payoff without
		requiring any net payment is called \emph{arbitrage}. Formally, the no-arbitrage
		condition is stated as
		\[\mbox{ there is no }z\in\reals^{1+J}\mbox{ such that } \langle q,z\rangle < 0, \text{ and }
		M_1(p)z:=\left[\begin{array}{c} p_1^\top D_1\\ \vdots\\ p_\Xi^\top D_\Xi \end{array}\right]z>0.\]
		The matrix $M_1(p)$ defines the span of payoffs in the financial market at goods prices $p$.
		By defining $M(p,q)=\left[\begin{array}{c} -q^\top \\M_1(p)\end{array}\right]$,
		the no-arbitrage condition is interpreted as the non-existence of a portfolio $z$ such that $M(p,q)z>0$. Equivalently, following an approach similar to that in \cite[Thm.\,9.3]{MQ2002}, this condition can be restated as
		\[\langle M(p,q) \rangle \cap \Delta_{\Xi+1} = \emptyset.\]
		
		Since $\Delta_{\Xi+1}$ is a compact set and $\langle M(p,q) \rangle$ is a closed subspace, there exists
		a strictly separating hyperplane defined by a normal vector $\sigma\neq 0$ such that
		\[\sup_{\tau \in \langle M(p,q) \rangle} \sigma\cdot\tau < \inf_{\tau \in \Delta_{\Xi+1}} \sigma\cdot\tau.\]
		Moreover, $\sigma\in\reals_{++}^{1+\Xi}$ and $\sigma\in\langle M(p,q)\rangle^{\perp}$, i.e.,
		$\sigma_\xi>0$ for $\xi=0,\ldots,\Xi$ and $\sigma\cdot M(p,q)z=0$ for all $z$.
		Finally, (redefining $\sigma\leftarrow\frac{\sigma}{\sigma_0}$ if necessary) the no-arbitrage
		condition can be stated as the existence of positive scalars $\sigma_\xi>0$ for $\xi=1,\ldots,\Xi$ such that
		\begin{equation}\label{nacq}
			q=\sum_{\xi=1}^\Xi \sigma_\xi D_\xi^\top p_\xi.
		\end{equation}
		Therefore, any prices $p$ and $q$ that satisfy the no-arbitrage
		condition have (at least one) associated vector $\sigma$ such
		that Equation~\eqref{nacq} holds. The vector $\sigma$ is usually
		denoted as the vector of \emph{state prices} or a vector that
		\emph{rationalizes} the prices $p$ and $q$. In what follows, we only
		consider prices that satisfy this condition.
		
		\subsection{A solution strategy for the agents' problems}
		We use the no-arbitrage condition to propose an alternative solution strategy
		for the equilibrium problem for this economy. Equilibrium conditions will be described
		in detail in Section~\ref{sec:geieq}. However, we will
		focus our analysis on equilibrium prices that are free of arbitrage. Thus,
		we assume that $q$ and $p$ are no-arbitrage prices, rationalized by
		$\sigma$, and define an alternative vector of prices:
		\begin{equation}\label{eq:ptildetop}
			\tilde{p}_0=p_0,\qquad \tilde{p}_\xi=\sigma_\xi p_\xi,\quad\xi=1,\ldots,\Xi.
		\end{equation}
		Note that with this transformation $\tilde{p}_{\xi,0}$ is no longer equal to 1
		for $\xi=1,\ldots,\Xi$. In fact, $\tilde{p}_{\xi,0}$ corresponds to the associated state
		price $\sigma_\xi$.
		
		Additionally, let us introduce natural bounds on the consumption
		and contract spaces, given by
		\[c_\xi,w_\xi \leq m,\quad\xi=0,\ldots,\Xi,\qquad z^+,z^-\leq M(>0),\]
		where the first inequality follows from individual consumption being bounded by the
		aggregate available goods. Note that this set of inequalities also applies to the retention
		decision variables. Under these assumptions, and under the no-arbitrage condition,
		the consumption set turns out to be compact \cite[Ch.\,5,Thm.\,1]{Kir98ge}. Therefore,
		there is an upper bound on financial contracts, represented by the second inequality (note that this bound considers potential goods produced by home production).
		These considerations lead to the following formulation of the
		agent's problem, expressed in terms of the individual excess supply
		$s_\xi$ (defined in \eqref{sdef}) and the new price system $\tilde p$ as:
		\begin{equation}\label{modagprob}
			\begin{aligned}
				\max_{\{c,w,y,z^+,z^-\}}&\;U^i\left(c,w\right),\\
				{\rm s.t.}&\;\langle \tilde{p}_0,s_0\rangle-\sum_{\xi=1}^\Xi\langle \tilde{p}_\xi, D_\xi(z^+-z^-)\rangle \geq 0,\\
				&\langle \tilde{p}_\xi,s_\xi\rangle\geq 0,\quad\xi=1,\ldots,\Xi,\\
				&(c,w)\in C^i\cap m\ball_{(\Xi+1)(L+1)},\quad z^+,z^-\in M\ball_{J+1},\quad y\in Y^i.
			\end{aligned}
		\end{equation}
		A crucial observation arises from this substitution: the agent's
		problem now only depends on prices $\tilde p$ and is no longer
		dependent on prices $q$. Thus, defining the equilibrium
		conditions for this new version and solving the equilibrium for
		$\tilde p$ reduces the problem's dimension and its computational difficulty.
		
		\state Remark.
		Under Assumption~\ref{ass:indisp}, it is easy to see that Walras'
		law holds in this economy, and the budget constraints turn out
		to be active for every state $\xi=0,1,\ldots,\Xi$.
		
		Finally, the last element for establishing the equilibrium problem under this
		new setting is the recovery process for obtaining the corresponding
		prices of the original economy ($p,q$) from the prices of the
		modified problem, $\tilde p$. Thus, let $\tilde p$ be an equilibrium price for the modified
		economy (a precise definition of the equilibrium for the modified economy
		is given in Definition~\ref{def:eqmod}). First, we use Equation~\eqref{nacq} to obtain $q$ as follows:
		\begin{equation}\label{finq}
			q=\sum_{\xi=1}^\Xi D_\xi^\top \tilde{p}_\xi.
		\end{equation}
		
		From here, we compute the state prices (or multipliers) $\sigma$ that rationalize
		this system of prices by using the price normalization introduced in
		Equation~\eqref{eq:ptildetop}:
		\begin{equation}\label{mktp}
			p_{\xi,0}=1 \iff \sigma_\xi=\tilde{p}_{\xi,0}.
		\end{equation}
		
		Finally, the equilibrium price for commodities is determined by
		\[p_0=\tilde{p}_0,\quad p_{\xi,0}=1,\quad p_{\xi,l}=\frac{\tilde{p}_{\xi,l}}{\tilde{p}_{\xi,0}},\quad l=1,\ldots,L,\quad\xi=1,\ldots,\Xi.\]
		
		Note that by using this procedure, we characterize equilibrium prices
		for goods and financial contracts for the original problem, $p$ and $q$,
		by computing the equilibrium prices for commodities of the auxiliary
		economy, $\tilde p$. Specifically, the absence of financial contract prices in
		the auxiliary economy has further implications for the family of equilibrium
		problems that can be solved. As mentioned before, GEI models exhibit
		difficulties when the financial return matrix drops rank; computing equilibrium points then requires an extra procedure beyond classic homotopies. However, our approach overcomes
		this situation by considering prices on the \emph{manifold of no-arbitrage}. Thus, we compute prices for a modified pure-exchange economy where classic methods can be applied directly. Note that the concept of
		no-arbitrage equilibrium has been proposed before, for example by Hens in
		\cite[Ch.5,\S3.3]{Kir98ge}, but it has not been used for computational purposes
		to the best of our knowledge.
		
		The next step is to analyze the equilibrium conditions for the
		original economy and provide the optimization characterization
		of equilibrium points by defining the \emph{Walrasian bifunction}
		associated with this economy.
		
		\section{Equilibrium formulation and Approximation}\label{sec:geieq}
		In this section, we discuss the equilibrium conditions for this
		general equilibrium model. Consider an economy with normalized prices where the no-arbitrage condition has been incorporated into the formulation of the agents' utility maximization problems. That is, the agents' problems depend only on the alternative prices $\tilde p$
		(as in Problem~\ref{modagprob}).
		Let us denote this modified economy by $\Tilde \cE$ and define the domain of the price vector $\tilde p$ as the set $\Gamma:=\reals^{(1+L)(1+\Xi)-1}_+$
		(since $\tilde{p}_{0,0}=1$ is fixed). The equilibrium conditions are as follows:
		
		\begin{definition}{\rm (Equilibrium for the modified economy)}\label{def:eqmod}
			Consider the modified economy $\Tilde \cE$. An equilibrium
			for this economy is given by a system of prices
			${\tilde p}^*=({\tilde p}^*_0;{\tilde p}^*_1,\ldots,{\tilde p}^*_\Xi)\in\Gamma$
			and individual excess supply allocations $s^i=s^i({\tilde p}^*)$ for $i=1,\ldots,I$ (which depend on the optimal solutions $({c}^i,w^i,y^i,{z}^{+i},{z}^{-i})$ to the agents' problems) such that:
			\begin{enumerate}[label=(\alph*)]
				\item {\bf Utility maximization}: For every agent $i=1,\ldots,I$, the allocation solves the optimization problem in~\eqref{modagprob}; that is, it maximizes the utility function $U^i$ over the budget set
				\[\cB^i({\tilde p}^*)=\left\{(c,w,y,z^+,z^-)\left|\begin{array}{l}\displaystyle
					\langle {\tilde p}^*_0,s_0^i\rangle -\sum_{\xi=1}^\Xi \langle {\tilde p}^*_\xi,D_\xi (z^+-z^-)\rangle \geq 0,\\\displaystyle\langle {\tilde p}^*_\xi,s_\xi^i \rangle \geq 0,\quad\xi=1,\ldots,\Xi\\
					0\leq c,w\leq m,\quad y\in Y^i,\quad 0\leq z^+,z^-\leq M\end{array}\right.\right\}.\]
				
				\item {\bf Commodity Market Clearing}: The aggregate demand for
				goods at prices ${\tilde p}^*$ does not exceed the aggregate supply
				in the first stage, as well as in every possible state at the second stage; that is:
				\[{\rm ES}_{\xi,l}\geq 0,\quad l=0,\ldots,L,\quad\xi=0,\ldots,\Xi.\]
				
				\item {\bf Financial Market Clearing}: The market for financial contracts clears; that is:
				\[\sum_{i=1}^I { z}^{+,i}_j=\sum_{i=1}^I { z}^{-,i}_j,\quad j=0,\ldots,J.\]
			\end{enumerate}
		\end{definition}
		
		Under this definition of equilibrium, we propose a solution procedure
		based on solving an associated optimization problem,
		following the approach first proposed in \cite{JoWe02wlrs}. Here, equilibrium
		prices are characterized as maxinf-points of a so-called Walrasian bifunction, which
		captures the market imbalance in aggregate excess supply at given prices. This characterization is motivated by the existence proof in \cite{debreu1959theory}. Thus, the first step toward the maxinf characterization is to define an associated \emph{Walrasian function} for the modified economy. To this end, we define the dual variables $\tilde g\in\Gamma$ and the Walrasian bifunction $W:\Gamma\times\Gamma\to\reals$ as follows:
		\begin{equation}\label{walgei}
			\begin{aligned}
				W({\tilde p},{\tilde g})\,&=\langle {\tilde g},\Tilde {\rm ES}(\tilde p)\rangle-\rho\left\|\sum_{i=1}^I (z^{+i}- z^{-i})\right\|_2^2,\\ 
				&=\sum_{\xi=0}^\Xi\sum_{{\substack{l=0 \\ (\xi,l)\neq (0,0)}}}^L {\tilde g}_{\xi,l} {\rm ES}_{\xi,l}(\tilde p)-\rho\sum_{j=0}^J\left(\sum_{i=1}^I (z_j^{+i}- z_j^{-i})\right)^2,
			\end{aligned}
		\end{equation}
		for some positive parameter $\rho>0$, where $\Tilde {\rm ES}(\tilde p)$ corresponds to ${\rm ES}(\tilde p)$ without considering the first component (${\rm ES}_{0,0}(\tilde p)$).
		
		In what follows, we are interested in relating equilibria with maxinf-points of the
		Walrasian bifunction. A point $\tilde p$ is referred to as a \emph{maxinf-point} of $W$ if
		\[{\tilde p} \in\argmax_{p\in\Gamma} \left[\inf_{g\in \Gamma} W(p,g)\right],\]
		and, for $\varepsilon\geq0$, $\tilde p_\varepsilon$ is called an \emph{$\varepsilon$-maxinf-point} of $W$, if
		\[\inf_{g \in \Gamma} W(\tilde p_\varepsilon,g)\geq \sup_{p \in \Gamma} \inf_{g \in \Gamma} W(p,g) -\varepsilon.\] 
		The sets of such points are denoted by $\nargmaxinf W$ and
		$\varepsilon$-$\nargmaxinf W$, respectively.
		Notice that the definition of an $\varepsilon$-maxinf-point is based on an approximation of optimality for the function $W$. We will see later that
		this is the correct interpretation of an approximate maxinf-point in the context
		of an approximate equilibrium point.
		
		Let us state the first result about the relationship between
		maxinf-points of the Walrasian function and equilibrium points for the economy.

		\begin{lemma}[Walras equilibrium prices and maxinf-points]\label{eqmaxinf}
			Every maxinf-point ${\tilde p}\in \Gamma$ of the Walrasian bifunction $W$
			such that $\inf_{g \in \Gamma} W({\tilde p},g) \geq 0$ is an equilibrium point.
		\end{lemma}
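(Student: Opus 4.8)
The plan is to verify the three conditions of Definition~\ref{def:eqmod} directly, drawing on only two facts: that $\tilde p$ maximizes $p\mapsto\inf_{g\in\Gamma}W(p,g)$ over $\Gamma$, and that $\inf_{g\in\Gamma}W(\tilde p,g)\ge 0$. Condition (a), utility maximization, needs no argument: the individual excess supplies $s^i=s^i(\tilde p)$ are, by definition, built from an optimal solution $(c^i,w^i,y^i,z^{+i},z^{-i})$ of the agent's problem~\eqref{modagprob} at price $\tilde p$, so each agent's allocation already solves the maximization over $\cB^i(\tilde p)$.

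The core step is to unpack $\inf_{g\in\Gamma}W(\tilde p,g)\ge 0$. For fixed $\tilde p$ the map $g\mapsto W(\tilde p,g)=\langle g,\widetilde{\mathrm{ES}}(\tilde p)\rangle-\rho\,\big\|\sum_{i}(z^{+i}-z^{-i})\big\|_2^2$ is affine in $g$, and the quadratic penalty does not involve $g$; since $g$ ranges over the full nonnegative orthant $\Gamma=\reals_+^{(1+L)(1+\Xi)-1}$, the infimum is $-\infty$ as soon as some coordinate of $\widetilde{\mathrm{ES}}(\tilde p)$ is strictly negative (let $g\to+\infty$ along that coordinate), and is otherwise attained at $g=0$ with value $-\rho\,\big\|\sum_i(z^{+i}-z^{-i})\big\|_2^2$. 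Thus $\inf_{g}W(\tilde p,g)\ge 0$ forces simultaneously $\widetilde{\mathrm{ES}}(\tilde p)\ge 0$ and $\sum_{i=1}^I(z^{+i}-z^{-i})=0$. The first is commodity market clearing, condition (b), for every component $(\xi,l)\ne(0,0)$; the second is financial market clearing, condition (c).

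It remains to recover the one component the Walrasian does not see, $\mathrm{ES}_{0,0}(\tilde p)\ge 0$, and this is where I expect the real work to lie. I would invoke Walras' law as recorded in the Remark following~\eqref{modagprob}: under Assumption~\ref{ass:indisp} every agent's budget constraints bind at the optimum, so $\langle\tilde p_\xi,s^i_\xi\rangle=0$ for $\xi=1,\dots,\Xi$ and $\langle\tilde p_0,s^i_0\rangle=\sum_{\xi=1}^\Xi\langle\tilde p_\xi,D_\xi(z^{+i}-z^{-i})\rangle$. Summing the stage-$0$ identity over $i$ and substituting the financial clearing already obtained annihilates the right-hand side, so $\langle\tilde p_0,\mathrm{ES}_0(\tilde p)\rangle=0$, and likewise $\langle\tilde p_\xi,\mathrm{ES}_\xi(\tilde p)\rangle=0$ for each $\xi\ge1$. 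Using the numéraire normalization $\tilde p_{0,0}=1$, together with $\tilde p_{0,l}\ge0$ and $\mathrm{ES}_{0,l}(\tilde p)\ge0$ for $l=1,\dots,L$ from the previous step, the stage-$0$ value identity gives $\mathrm{ES}_{0,0}(\tilde p)=-\sum_{l=1}^L\tilde p_{0,l}\,\mathrm{ES}_{0,l}(\tilde p)$, hence $\mathrm{ES}_{0,0}(\tilde p)\le0$; the content of the last step is the reverse inequality, equivalently complementary slackness between $\tilde p_0$ and $\mathrm{ES}_0(\tilde p)$, which together with the identity forces $\mathrm{ES}_{0,0}(\tilde p)=0$.

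I therefore expect the main obstacle to be precisely this: extracting the correct sign for the omitted numéraire market from the scalar Walras identity, i.e.\ ruling out that $\langle\tilde p_0,\mathrm{ES}_0(\tilde p)\rangle=0$ is achieved with $\mathrm{ES}_{0,0}(\tilde p)<0$ compensated by strictly positive value elsewhere in stage $0$; this step must use more than the bare equality, and is the natural place to bring in the structure of the agents' optimal responses. A secondary point needing care is that Walras' law survives the new ingredients of the model — the home-production inputs $T^i_0y^i$, the transaction-cost returns $D_0$, and the retention variables $w^i_0$ — which is exactly why one leans on the binding-budget statement of the Remark rather than on local nonsatiation alone. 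Beyond guaranteeing that the optimal value of the maxinf problem is exactly $0$, the maxinf (as opposed to merely feasible) status of $\tilde p$ plays no further role in this direction; the three equilibrium conditions are read off entirely from the inequality $\inf_{g}W(\tilde p,g)\ge0$ together with Walras' law.
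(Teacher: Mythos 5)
Your treatment of the financial-market and non-num\'eraire commodity conditions is correct and is in substance identical to the paper's: the paper evaluates $W(\tilde p,\cdot)$ at $g=0$ to force $\sum_i(z^{+i}-z^{-i})=0$ and at the unit vectors $\mathbf{e}_{\xi,l}$ to force ${\rm ES}_{\xi,l}(\tilde p)\geq 0$ for $(\xi,l)\neq(0,0)$, which is exactly your observation that the infimum of an affine function over the nonnegative orthant is $-\infty$ unless the linear coefficients are nonnegative, and is then attained at $g=0$. Condition (a) is likewise treated as definitional in both.

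However, you leave a genuine gap at precisely the point you flag: the sign of ${\rm ES}_{0,0}(\tilde p)$. Your Walras-law computation correctly yields $\langle\tilde p_0,{\rm ES}_0(\tilde p)\rangle=0$ and hence ${\rm ES}_{0,0}(\tilde p)=-\sum_{l\geq 1}\tilde p_{0,l}{\rm ES}_{0,l}(\tilde p)\leq 0$, but this is the \emph{wrong} direction for condition (b), and you explicitly do not supply the reverse inequality. The ingredient you never invoke is Assumption~\ref{ass:indisp} applied to the num\'eraire market itself: the paper uses the indispensability of good $(0,0)$ to every agent to pin down the sign of ${\rm ES}_{0,0}(\tilde p)$ at any price, and only then combines this with the Walras-law identity and the already-established nonnegativity of the other stage-$0$ components to conclude ${\rm ES}_{0,0}(\tilde p)=0$. (The paper attributes ${\rm ES}_{0,0}\leq 0$ to indispensability and ${\rm ES}_{0,0}\geq 0$ to Walras' law; your computation shows that Walras' law in fact delivers the $\leq 0$ direction, so however the bookkeeping is arranged, one of the two inequalities must come from the indispensability of the num\'eraire and cannot be extracted from the scalar identity alone, exactly as you suspected.) In your write-up, Assumption~\ref{ass:indisp} is used only to justify that budgets bind; without also using it to control the demand for the num\'eraire good, the conclusion ${\rm ES}_{0,0}(\tilde p)=0$ does not follow, and your proof of condition (b) is incomplete for that one component.
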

		\begin{proof} 
			Let $\tilde p$ be a maxinf-point of $W$ such that $\inf_{g \in \Gamma} W({\tilde p},g)\geq 0$.
			This implies $W(\tilde{p}, g) \geq 0$ for all $g \in \Gamma$ if $W(\tilde{p}, \cdot)$ is, for example, lower semi-continuous and the infimum is attained, or by properties of infimum.
			Notice that, by Assumption~\ref{ass:indisp}, for any price $\tilde p$, the excess supply associated
			with the numéraire good (which is defined as the good indispensable to all agents) must be
			non-positive, i.e., ${\rm ES}_{0,0}(\tilde p)\leq 0$. Moreover, taking ${\tilde g}=0\in \Gamma$,
			\[0\leq W({\tilde p},0)= - \rho\sum_{j=0}^J\left(\sum_{i=1}^I (z_j^{+,i}(\tilde p)- z_j^{-,i}(\tilde p))\right)^2,\]
			which implies $\sum_{i=1}^I (z_j^{+,i}(\tilde p)- z_j^{-,i}(\tilde p))=0$ for $j=0,\ldots,J$. This entails the equilibrium conditions for financial contracts.
			Additionally, consider ${\tilde g}=\mathbf{e}_{\xi,l}\in \Gamma$ (the unit vector with 1 at position $(\xi,l)$ and 0 elsewhere, for $(\xi,l) \neq (0,0)$). Then,
			\[0\leq W({\tilde p},\mathbf{e}_{\xi,l}) = {\rm ES}_{\xi,l}(\tilde p) - \rho \sum_{j=0}^J\left(\sum_{i=1}^I (z_j^{+,i}(\tilde p)- z_j^{-,i}(\tilde p))\right)^2 = {\rm ES}_{\xi,l}(\tilde p).\]
			This provides the equilibrium conditions ${\rm ES}_{\xi,l}(\tilde p) \geq 0$ for those markets.
			Finally, the market clearing condition for the numéraire good, ${\rm ES}_{0,0}(\tilde p) \geq 0$, follows from Walras' law. Since we already have ${\rm ES}_{0,0}(\tilde p) \leq 0$, it must be that ${\rm ES}_{0,0}(\tilde p) = 0$.
			Thus, $\tilde p$ is an equilibrium point.
		\end{proof}
		
		Using Lemma~\ref{eqmaxinf}, we pose the problem of finding an equilibrium
		point as that of finding maxinf-points for the Walrasian function. We propose a
		method to solve this problem using the appropriate notion of convergence
		on the space of bifunctions, which corresponds to \emph{lopsided convergence}.
		Following the same approach that we initiated in \cite{DeJoWe16awlrs}, we
		build a family of approximating bifunctions that ancillary lop-converges to the
		Walrasian bifunction. Lopsided convergence (ancillary-tight) is aimed at the
		convergence of maxinf-points and, in our context, is defined as follows:
		
		\begin{definition}[Ancillary-tight lopsided convergence]\label{lopdef}
			A sequence $\lset W^\nu:\Gamma^\nu \times \Gamma^\nu \to \reals \rset_{\nu \in \nats}$ lop-converges ancillary-tight to the function $W:\Gamma\times \Gamma\to \reals$, denoted by $W^\nu\lopto W$, if
			\begin{description}
				\item[(a)] for all $g\in \Gamma$, and all sequences $(p^\nu\in \Gamma^\nu)$ with $p^\nu \to p\in \Gamma$, there exists a sequence $(g^\nu \in \Gamma^\nu)$ with $g^\nu \to g$ such that
				\[\limsup_\nu W^\nu(p^\nu,g^\nu) \leq W(p,g);\]
				\item[(b)] for all $p\in \Gamma$, there exists a sequence $(p^\nu \in \Gamma^\nu)$ with $p^\nu \to p$ such that for any sequence $(g^\nu \in \Gamma^\nu)$ with $g^\nu \to g$,
				\[\liminf_\nu W^\nu(p^\nu,g^\nu)\geq W(p,g)\quad \mbox{when } g\in \Gamma, \quad \mbox{and} \quad \liminf_\nu W^\nu(p^\nu,g^\nu)\to \infty\quad \mbox{when } g\notin \Gamma;\]
				\item[(b-t)] and for any $\varepsilon >0$ and any sequence $p^\nu \to p$, one can find a compact set $K_\varepsilon\subset \Gamma$ (possibly depending on $\lset p^\nu\rset$) such that for all $\nu$ sufficiently large,
				\[\inf_{g \in \Gamma^\nu\cap K_\varepsilon} W^\nu(p^\nu,g) \leq \inf_{g \in \Gamma^\nu} W^\nu(p^\nu,g)+\varepsilon.\]
			\end{description}
		\end{definition}
		
		We propose a family of approximating bifunctions, the \emph{augmented Walrasians}
		$\{W^\nu\}_{\nu\in\nats}$, based on a non-concave augmentation technique. This construction
		is an extension of our previous work \cite{DeJoWe16awlrs}. This family is
		defined for a non-decreasing sequence of scalars
		$\{r^\nu\}_{\nu\in\nats}$ such that $r^\nu\upto r$ (for a sufficiently large $r$), a non-decreasing sequence of compact sets $\{K^\nu\}_{\nu\in\nats}$ with $\Gamma^\nu = K^\nu \cap \Gamma$,
		and an augmenting function $\sigma:\Gamma\to\reals$. Consider the
		approximating domains $\Gamma^\nu = [0,K^\nu_{\text{vec}}]\cap \Gamma$ (if $K^\nu$ are vectors defining boxes), and define
		$W^\nu:\Gamma^\nu\times \Gamma^\nu\to\reals$ for $\nu\in\nats$ by:
		\begin{equation}\label{geiaw}
			W^\nu({\tilde p},{\tilde g})=\inf_z \Lset W({\tilde p},{\tilde g}-z)+r^\nu\sigma ^*\left(\frac{1}{r^{\nu}}z\right)\Rset.
		\end{equation}
		
		In what follows, we assume that the agents' utility maximization problem has a (locally) unique solution for every $\tilde p\in \Gamma$. Note that this condition is related to the classical assumption of negative definiteness of the Hessian of the utility functions over the entire domain. Additionally, this condition can be guaranteed by the assumption of strong convexity of preferences \cite[Thm.4.7]{rtrutility}. For further discussion on utility representation, consult \cite{rtrutility}.
		
		\begin{theorem}[Upper-semi continuity of the Walrasian]\label{theo:uscwalgei}
			The Walrasian function $W$ for the modified economy defined in \eqref{walgei} is upper semi-continuous (usc) in its first argument, i.e., for fixed ${\tilde g}\in \Gamma$, $W(\cdot, {\tilde g})$ is usc.
		\end{theorem}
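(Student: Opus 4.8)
The plan is to trace the $\tilde p$-dependence of $W(\cdot,\tilde g)$ back through the agents' optimal responses. For a fixed dual price $\tilde g\in\Gamma$ we have, from \eqref{walgei},
\[
W(\tilde p,\tilde g)=\sum_{(\xi,l)\neq(0,0)}\tilde g_{\xi,l}\,{\rm ES}_{\xi,l}(\tilde p)\;-\;\rho\sum_{j=0}^J\Bigl(\sum_{i=1}^I (z^{+i}_j(\tilde p)-z^{-i}_j(\tilde p))\Bigr)^2,
\]
and, by the definitions \eqref{sdef} of the individual excess supplies and of ${\rm ES}$, every term on the right depends on $\tilde p$ only through the tuple of optimal decisions $x^i(\tilde p)=(c^i,w^i,y^i,z^{+i},z^{-i})(\tilde p)$ of problem \eqref{modagprob}, and does so through a map that is polynomial (hence continuous) in those decisions. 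Thus it suffices to establish the appropriate semicontinuity of $\tilde p\mapsto x^i(\tilde p)$ for each $i$: once each ${\rm ES}_{\xi,l}$ is usc and the aggregate net-trade map $\tilde p\mapsto\sum_i(z^{+i}_j-z^{-i}_j)(\tilde p)$ is continuous, the conclusion follows, since $\tilde g\in\Gamma=\reals^{(1+L)(1+\Xi)-1}_+$ has nonnegative entries, so the first sum is a nonnegative combination of usc functions and hence usc, the second term is continuous, and a sum of usc functions is usc.

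Next I would run the standard parametric-optimization (Berge-type) argument for the budget correspondence $\cB^i(\cdot)$ of \eqref{modagprob} (equivalently, $\cB^i(\tilde p)$ of Definition~\ref{def:eqmod}). Four facts are needed: (i) $\cB^i(\tilde p)$ is \emph{nonempty} for every $\tilde p\in\Gamma$ --- the recourse bundle $(\tilde c^i,\tilde w^i,\tilde y,0,0)$ furnished by the ample survivability Assumption~\ref{ass:ample} is always feasible, since with zero financial positions the budget inequalities reduce to $\langle\tilde p_\xi,s^i_\xi\rangle\geq 0$ with $s^i_\xi\geq 0$; (ii) $\cB^i(\tilde p)$ is \emph{compact} and the correspondence is \emph{uniformly locally bounded}, because of the box constraints $c,w\le m$, $z^+,z^-\le M$ together with closedness of $C^i$ and of the convex cone $Y^i$; (iii) $\cB^i$ is \emph{outer semicontinuous}, as the two families of budget inequalities in \eqref{modagprob} are jointly continuous in $(\tilde p,x)$, so feasibility passes to the limit; and (iv) $\cB^i$ is \emph{inner semicontinuous}, which is where Assumption~\ref{ass:indisp} (the numéraire has price $\tilde p_{\xi,0}>0$ in every state) and Assumption~\ref{ass:ample} act as the constraint qualification, allowing a feasible allocation at $\tilde p$ to be tracked by feasible allocations at nearby prices. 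With (i)--(iv) and $U^i$ usc, the maximum theorem yields that the value functions $v^i$ are continuous and the solution correspondences $S^i=\argmax_{\cB^i(\tilde p)}U^i$ are outer semicontinuous; the running assumption that \eqref{modagprob} has a (locally) unique solution makes each $S^i$ single-valued, and a single-valued, locally bounded, outer semicontinuous map into Euclidean space is continuous. Hence $\tilde p\mapsto x^i(\tilde p)$ is continuous, so every ${\rm ES}_{\xi,l}$ and every aggregate net trade is continuous in $\tilde p$, and by the previous paragraph $W(\cdot,\tilde g)$ is continuous --- in particular usc, which is all that the maxinf analysis in the sequel requires.

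The main obstacle is step (iv), i.e. showing that the optimal responses do not jump as prices degenerate: along $\tilde p^\nu\to\tilde p$ an optimal $x^i(\tilde p^\nu)$ could a priori converge to a point that is feasible but \emph{not} optimal at $\tilde p$, because $U^i$ is merely usc and cannot by itself control $\liminf_\nu U^i$ of an approximating sequence --- the classical discontinuity of demand at zero-wealth prices. The remedy is exactly the relatively-complete-recourse content of Assumption~\ref{ass:ample}, combined with indispensability (Assumption~\ref{ass:indisp}) and Walras' law (the budget constraints are active at the optimum, per the Remark following \eqref{modagprob}): these keep the budget set from collapsing and make it vary inner-semicontinuously, which restores lower semicontinuity of $v^i$ and hence outer semicontinuity of $S^i$. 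Concretely, I would formalize (iv) by taking a feasible $x$ at $\tilde p$, using the active-constraint / Walras'-law structure to rescale or perturb it into feasible $x^\nu$ at $\tilde p^\nu$ with $x^\nu\to x$, and then combining this with usc of $U^i$ to obtain the optimality-preservation needed for the maximum theorem; everything else reduces to the routine continuity bookkeeping of the first paragraph.
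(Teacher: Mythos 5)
Your proposal is correct and follows essentially the same route as the paper's own proof: Berge's maximum theorem applied to the budget correspondence $\cB^i(\cdot)$, the running local-uniqueness assumption to upgrade upper hemicontinuity of the solution correspondence to continuity of the optimal responses, and then composition with the continuous dependence of ${\rm ES}$ and the quadratic financial-clearing term to conclude that $W(\cdot,\tilde g)$ is continuous, hence usc. The only difference is one of care rather than of method: where the paper simply asserts that the budget sets are continuous multifunctions, you isolate inner semicontinuity as the delicate step and correctly identify Assumptions~\ref{ass:indisp} and~\ref{ass:ample} as the ingredients that prevent the budget set from collapsing.
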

		\begin{proof}
			For each agent $i$, define the choice variable $v_i=(c^i,w^i,y^i,z^{+,i},z^{-,i})$, and let $v=(v_1, \dots, v_I)$. Define the function
			\[f({\tilde p},v)=\sum_{i=1}^I U^i(c^i,w^i) \quad \text{if } v_i \in \cB^i({\tilde p}) \text{ for all } i, \text{ and financial markets clear},\]
			and $f({\tilde p},v) = -\infty$ otherwise. The agents' problems are to maximize $U^i$ subject to their individual budget constraints. The excess supply ${\rm ES}(\tilde p)$ depends on the optimal choices.
			Let $v^*(\tilde p)$ be the (assumed unique) collection of optimal choices for all agents.
			The function $P(\tilde p) = v^*(\tilde p)$ maps prices to optimal allocations.
			The individual budget sets $\mathcal{B}^i(\tilde p)$ are continuous multifunctions (correspondences) of $\tilde p$. Given that $U^i$ are usc and the budget sets are compact and vary continuously, Berge's Maximum Theorem implies that the optimal value function is continuous and the optimal solution correspondence $v_i^*(\tilde p)$ is upper hemi-continuous. If uniqueness is assumed, $v_i^*(\tilde p)$ becomes a continuous function.
			Then ${\rm ES}(\tilde p)$ is a continuous function of $\tilde p$. The term $\sum_{j=0}^J\left(\sum_{i=1}^I (z_j^{+i}- z_j^{-i})\right)^2$ is also continuous.
			Since $W(\tilde p, \tilde g)$ is a linear combination (for fixed $\tilde g$) of terms derived from continuous functions of $\tilde p$, $W(\cdot, \tilde g)$ is continuous, and therefore usc.
		\end{proof}
		
		\begin{lemma}[Ky-Fan properties of the augmented Walrasian]\label{lemm:kyfan}
			The finite-valued bifunctions $\{W^\nu\}_{\nu\in\nats}$, defined in \eqref{geiaw}, are Ky-Fan bifunctions. Moreover, for every $\nu\in\nats$, the set of maxinf-points of $W^\nu$ is nonempty.
		\end{lemma}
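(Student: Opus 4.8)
The plan is to check the four structural conditions that make each $W^\nu$ a Ky-Fan bifunction on the product $\Gamma^\nu\times\Gamma^\nu$ — that $\Gamma^\nu$ is nonempty, convex and compact; that $W^\nu$ is finite-valued; that $W^\nu(\cdot,\tilde g)$ is usc for each $\tilde g$; and that $W^\nu(\tilde p,\cdot)$ is convex and lsc for each $\tilde p$ — and then to deduce $\nargmaxinf W^\nu\neq\emptyset$ either from the Ky Fan minimax theorem or, directly, from the attainment of the supremum of an usc function over a compact set. The single computation that drives everything is the observation that, for fixed $\tilde p$, the Walrasian $W(\tilde p,\cdot)$ of \eqref{walgei} is \emph{affine} in its second argument: the quadratic financial-clearing penalty depends only on $\tilde p$ (through the agents' optimal responses), not on $\tilde g$. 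Substituting $\tilde g-z$ into \eqref{geiaw} and rescaling $z=r^\nu u$ then gives the identity
\[
W^\nu(\tilde p,\tilde g)=W(\tilde p,\tilde g)+r^\nu\inf_u\Lset\sigma^*(u)-\langle u,\Tilde{\rm ES}(\tilde p)\rangle\Rset=W(\tilde p,\tilde g)-r^\nu\,\sigma^{**}\!\big(\Tilde{\rm ES}(\tilde p)\big),
\]
which I would use throughout.

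Granting this, I would verify the conditions as follows. The domain $\Gamma^\nu=[0,K^\nu]\cap\Gamma$ is a box inside the nonnegative orthant, hence nonempty, convex and compact. For fixed $\tilde p$, the displayed identity shows $W^\nu(\tilde p,\cdot)$ is affine (the $\sigma^{**}$ term is a constant in $\tilde g$), hence convex and lsc — equivalently, $W^\nu(\tilde p,\cdot)$ is the inf-convolution of the affine map $W(\tilde p,\cdot)$ with the convex function $r^\nu\sigma^*(\cdot/r^\nu)$, and such an inf-convolution is convex. For fixed $\tilde g$, upper semicontinuity in $\tilde p$ follows because $W^\nu(\cdot,\tilde g)=\inf_z\{W(\cdot,\tilde g-z)+r^\nu\sigma^*(z/r^\nu)\}$ is a pointwise infimum of the maps $\tilde p\mapsto W(\tilde p,\tilde g-z)+\text{const}$, each of which — by the affine form of $W(\tilde p,\cdot)$ and the continuity of $\Tilde{\rm ES}(\cdot)$ and of the aggregate net financial position established in Theorem~\ref{theo:uscwalgei} — is continuous in $\tilde p$; an infimum of usc functions is usc. (Alternatively: in the identity, the first term is continuous, $\Tilde{\rm ES}(\cdot)$ is continuous, $\sigma^{**}$ is lsc, so $-r^\nu\sigma^{**}(\Tilde{\rm ES}(\cdot))$ is usc and the sum is usc.)

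The step I expect to be the real obstacle is finite-valuedness. From the identity, $W^\nu(\tilde p,\tilde g)\in\reals$ is equivalent to $\Tilde{\rm ES}(\tilde p)\in\dom\sigma^{**}$: the value cannot be $+\infty$ because $W$ is real-valued, and it cannot be $-\infty$ precisely when $\sigma^{**}$ is finite at $\Tilde{\rm ES}(\tilde p)$. Here one invokes the standing properties of the augmenting function — $\sigma$ finite-valued with $\min\sigma=\sigma(0)=0$ and level-bounded — so that $r^\nu\sigma^*(\cdot/r^\nu)$ is a coercive convex function and the inf-convolution with the affine $W(\tilde p,\cdot)$ is attained and real; equivalently, $\sigma^{**}$ is a proper closed convex function whose domain contains the bounded set $\Tilde{\rm ES}(\Gamma^\nu)$ (with $\dom\sigma^{**}=\reals^{(1+L)(1+\Xi)-1}$ when $\sigma$ itself is finite-valued convex, e.g.\ quadratic). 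Thus each $W^\nu:\Gamma^\nu\times\Gamma^\nu\to\reals$ is a Ky-Fan bifunction. Finally, the maxinf objective $\tilde p\mapsto\inf_{\tilde g\in\Gamma^\nu}W^\nu(\tilde p,\tilde g)$ is a pointwise infimum of the usc functions $W^\nu(\cdot,\tilde g)$, hence usc, and is finite (an infimum of an affine function over a compact box); an usc function attains its supremum on the nonempty compact set $\Gamma^\nu$, so $\nargmaxinf W^\nu\neq\emptyset$ — which is exactly the conclusion supplied by Ky Fan's theorem once the above hypotheses are in place. This mirrors the corresponding argument in \cite{DeJoWe16awlrs}.
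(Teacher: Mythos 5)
Your proposal is correct and reaches the same conclusions, but it takes a genuinely more explicit route than the paper. The paper argues abstractly: usc of $W^\nu(\cdot,\tilde g)$ comes from Theorem~\ref{theo:uscwalgei} plus the general fact that inf-projections preserve upper semicontinuity, and convexity of $W^\nu(\tilde p,\cdot)$ comes from joint convexity of $(\tilde g,z)\mapsto W(\tilde p,\tilde g-z)+r^\nu\sigma^*(r^{-\nu}z)$ plus preservation of convexity under inf-projection; nonemptiness of $\nargmaxinf W^\nu$ is then attributed to Ky-Fan-type existence theorems on the compact set $\Gamma^\nu$. You instead exploit the specific structure of \eqref{walgei} -- that the quadratic financial penalty is independent of $\tilde g$ and $W(\tilde p,\cdot)$ is affine -- to collapse the inf-projection into the closed form $W^\nu(\tilde p,\tilde g)=W(\tilde p,\tilde g)-r^\nu\sigma^{**}(\Tilde{\rm ES}(\tilde p))$, from which convexity (indeed affineness) in $\tilde g$, usc in $\tilde p$, and finite-valuedness all drop out by inspection. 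This buys you two things the paper's proof glosses over: an honest treatment of finite-valuedness (reduced to $\Tilde{\rm ES}(\tilde p)\in\dom\sigma^{**}$, guaranteed for the quadratic augmenting function actually used), and a direct existence argument (usc inf-projection attains its supremum on the compact $\Gamma^\nu$) that does not lean on an unspecified Ky-Fan existence theorem. One caveat: your identity takes the infimum in \eqref{geiaw} over all $z$, i.e.\ it implicitly extends $W(\tilde p,\cdot)$ linearly beyond $\Gamma$; if the inner infimum is instead restricted to $z$ with $\tilde g-z\in\Gamma$, the correction term acquires a $\tilde g$-dependence and $W^\nu(\tilde p,\cdot)$ is convex but no longer affine. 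The paper is silent on this point and its own inf-projection argument covers either reading, so this is a presentational ambiguity inherited from the source rather than a gap in your reasoning, but you should state which convention you adopt.
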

		\begin{proof}
			Recall that a bifunction $F:K\times K\to\reals$ is a Ky-Fan function if:
			\begin{itemize}[itemsep=-0.1cm]
				\item[i.] For every $y\in K$, the function $F(\cdot,y)$ is usc.
				\item[ii.] For every $x\in K$, the function $F(x,\cdot)$ is convex (or more generally, quasi-concave if we are looking for saddle points, but for maxinf, convexity in the minimization variable is key for the inner infimum). Here, $W^\nu(\tilde p, \cdot)$ needs to be convex for standard Ky-Fan results on existence of saddle points. The inf-projection preserves convexity if the original function is convex in that argument. $W(\tilde p, \tilde g - z)$ is linear in $(\tilde g - z)$, and $\sigma^*$ is convex. So $W^\nu(\tilde p, \cdot)$ is convex.
			\end{itemize}
			The augmented Walrasian $W^\nu(\tilde p, \tilde g)$ is the inf-projection of $W({\tilde p},{\tilde g}-z)+r^\nu\sigma^*(r^{-\nu}z)$ with respect to $z$.
			Condition (i): For a fixed $\tilde g \in \Gamma^\nu$, $W^\nu(\cdot, \tilde g)$ is usc. This follows from the usc of $W(\cdot, \cdot)$ (Theorem~\ref{theo:uscwalgei}) and properties of inf-projections (e.g., \cite[Prop. 1.26]{VaAn} if conditions apply, or more directly if $W$ is continuous in $\tilde p$).
			Condition (ii): For a fixed $\tilde p \in \Gamma^\nu$, $W^\nu(\tilde p, \cdot)$ is convex. The function $(\tilde g, z) \mapsto W(\tilde p, \tilde g - z) + r^\nu \sigma^*(r^{-\nu}z)$ is convex in $\tilde g$ (since $W$ is linear in its second argument and $\tilde g \mapsto \tilde g-z$ is affine) and convex in $z$ (if $\sigma^*$ is convex). The inf-projection of a convex function is convex \cite[Prop. 2.22]{VaAn}.
			Since $\Gamma^\nu$ is compact and non-empty, and $W^\nu$ is a Ky-Fan function on $\Gamma^\nu \times \Gamma^\nu$, the set $\nargmaxinf W^\nu$ is non-empty by Ky-Fan's inequality or related existence theorems for maxinf points (e.g., \cite[Thm. 2, Ch. 6]{aubin2006applied}).
		\end{proof}
		
		\begin{theorem}[Lop-convergence of augmented Walrasians]\label{geilopaw}
			The sequence of augmented Walrasian bifunctions $\{W^\nu\}$ converges lopsided ancillary-tightly to the
			original Walrasian $W$, i.e., $W^\nu \lopto W$. Moreover, every cluster point ${\tilde p}$ of any sequence
			$\{{\tilde p}^\nu\}$ of maxinf-points of $W^\nu$ is a maxinf-point of $W$.
		\end{theorem}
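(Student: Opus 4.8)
The plan is to prove the two assertions separately. For the convergence $W^\nu\lopto W$ I would verify the three requirements (a), (b) and (b-t) of Definition~\ref{lopdef} one at a time; for the statement on cluster points I would then feed this convergence into the standard convergence-of-maxinf-points argument for ancillary-tight lopsided convergence (cf.~\cite{DeJoWe16awlrs}), using Lemma~\ref{lemm:kyfan} to guarantee that a sequence $\{\tilde p^\nu\}$ of maxinf-points of the $W^\nu$ exists in the first place.

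For requirement (a): fix $g\in\Gamma$ and an arbitrary sequence $p^\nu\to p$ with $p^\nu\in\Gamma^\nu$. Since the truncation boxes $K^\nu$ are non-decreasing and increase to $\Gamma$, one has $g\in\Gamma^\nu$ for all large $\nu$, so I can take $g^\nu=g$ (for the finitely many indices with $g\notin\Gamma^\nu$, take instead the projection of $g$ onto the compact convex box $\Gamma^\nu$, which still converges to $g$). Evaluating the infimum in~\eqref{geiaw} at $z=0$, and using the normalization $\sigma^*(0)=0$ (equivalently $\sigma\ge0$ with $\sigma(0)=0$), yields $W^\nu(p^\nu,g^\nu)\le W(p^\nu,g^\nu)$; then Theorem~\ref{theo:uscwalgei} --- in fact the joint continuity of $W$ shown in its proof --- gives $\limsup_\nu W(p^\nu,g^\nu)\le W(p,g)$, hence $\limsup_\nu W^\nu(p^\nu,g^\nu)\le W(p,g)$, which is~(a).

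For requirements (b) and (b-t): fix $p\in\Gamma$ and take $p^\nu=p$ (eventually in $\Gamma^\nu$, projections otherwise). The clause of (b) for $g\notin\Gamma$ is vacuous because $\Gamma^\nu\subset\Gamma$ and $\Gamma$ is closed, so it remains to show $\liminf_\nu W^\nu(p,g^\nu)\ge W(p,g)$ whenever $g^\nu\to g\in\Gamma$. Here I would use that $W(\tilde p,\cdot)$ is \emph{affine} --- its linear part being the aggregate excess-supply vector at $\tilde p$, its constant part $-\rho\|\sum_{i=1}^{I}(z^{+i}-z^{-i})\|_2^2$ --- so that the infimal convolution in~\eqref{geiaw} can be carried out in closed form through the Fenchel conjugate of the augmenting kernel; then, using $\sigma(0)=0$, the bounded growth $r^\nu\upto r$, and the continuity of the excess-supply map from Theorem~\ref{theo:uscwalgei}, one checks that the resulting correction term does not pull $W^\nu(p,g^\nu)$ below $W(p,g)$ in the limit. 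For (b-t): given $\varepsilon>0$ and $p^\nu\to p$, I must produce a compact $K_\varepsilon\subset\Gamma$ with $\inf_{g\in\Gamma^\nu\cap K_\varepsilon}W^\nu(p^\nu,g)\le\inf_{g\in\Gamma^\nu}W^\nu(p^\nu,g)+\varepsilon$ for all large $\nu$; this is exactly what the augmentation is for --- the penalty it introduces keeps a near-minimizer of $W^\nu(p^\nu,\cdot)$ from escaping to infinity as the boxes $\Gamma^\nu$ enlarge, so a near-minimizer can be found in a fixed box, which one takes for $K_\varepsilon$.

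Once (a), (b) and (b-t) hold, $W^\nu\lopto W$, and I would close as follows. From (b) and the tightness (b-t) one obtains $\liminf_\nu\nsupinf W^\nu\ge\nsupinf W$ (choose a near-maximizer $p$ of $\inf_gW(p,\cdot)$, take the companion sequence $p^\nu\to p$ from (b), use (b-t) to trap an $\varepsilon$-minimizer of $W^\nu(p^\nu,\cdot)$ inside one compact set, where it clusters and (b) applies). Now let $\tilde p^\nu\in\nargmaxinf W^\nu$, nonempty by Lemma~\ref{lemm:kyfan}, so that $\nsupinf W^\nu=\inf_{g\in\Gamma^\nu}W^\nu(\tilde p^\nu,g)$; let $\tilde p$ be any cluster point of $\{\tilde p^\nu\}$ and pass to a subsequence with $\tilde p^\nu\to\tilde p$. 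Applying (a) to that subsequence, for each $g\in\Gamma$ there is $g^\nu\to g$ with $g^\nu\in\Gamma^\nu$ and $\limsup_\nu W^\nu(\tilde p^\nu,g^\nu)\le W(\tilde p,g)$; combining with $\nsupinf W^\nu=\inf_{g'\in\Gamma^\nu}W^\nu(\tilde p^\nu,g')\le W^\nu(\tilde p^\nu,g^\nu)$ and taking $\limsup$ in $\nu$ gives $\nsupinf W\le\liminf_\nu\nsupinf W^\nu\le\limsup_\nu\nsupinf W^\nu\le W(\tilde p,g)$; since $g$ was arbitrary, $\nsupinf W\le\inf_{g\in\Gamma}W(\tilde p,g)$, so $\tilde p\in\nargmaxinf W$. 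I expect the tightness condition (b-t) to be the real obstacle --- keeping the inner infima of the $W^\nu(p^\nu,\cdot)$ essentially attained on one compact set as the domains $\Gamma^\nu$ grow --- while condition (a) and the rest reduce to bookkeeping with the continuity from Theorem~\ref{theo:uscwalgei} and the elementary normalization of the augmenting kernel.
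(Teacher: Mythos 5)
Your overall plan coincides with the paper's: verify (a), (b), (b-t) of Definition~\ref{lopdef} and then pass to maxinf-points. Your treatment of (a) is exactly the paper's argument (take $g^\nu=g$ eventually, bound $W^\nu\leq W$ by evaluating the infimum in \eqref{geiaw} at $z=0$ with $\sigma^*(0)=0$, conclude by Theorem~\ref{theo:uscwalgei}), and your closing argument for the cluster-point statement is the standard one, which you spell out in more detail than the paper does (the paper simply cites a general convergence result for ancillary-tight lop-convergence). The genuine divergence, and the genuine gap, is condition (b): the paper argues via epi-convergence of the inner functions $z\mapsto W(\tilde p,\tilde g^\nu-z)+r^\nu\sigma^*(z/r^\nu)$, while you propose to exploit the affineness of $W(\tilde p,\cdot)$ and compute the infimal convolution in closed form. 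But the step you defer --- ``one checks that the resulting correction term does not pull $W^\nu(p,g^\nu)$ below $W(p,g)$ in the limit'' --- is precisely where the argument breaks if you carry out your own computation. With $\sigma=\sigma^*=\frac{1}{2}|\cdot|^2$ and $W(\tilde p,\cdot)$ affine with linear part $\Tilde{\rm ES}(\tilde p)$, the (unconstrained) infimum over $z$ is attained at $z=r^\nu\,\Tilde{\rm ES}(\tilde p)$ and gives
\begin{equation*}
W^\nu(\tilde p,\tilde g)=W(\tilde p,\tilde g)-\tfrac{r^\nu}{2}\,\bigl|\Tilde{\rm ES}(\tilde p)\bigr|^2 ,
\end{equation*}
a correction that is strictly negative and, since $r^\nu\upto r>0$ rather than $r^\nu\downarrow 0$, does \emph{not} vanish as $\nu\to\infty$ at any price with nonzero (or, restricting $z$ so that $\tilde g-z\in\Gamma$, with some negative) excess supply. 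So the inequality $\liminf_\nu W^\nu(p^\nu,g^\nu)\geq W(p,g)$ fails off equilibrium, and continuity of $\Tilde{\rm ES}$ prevents rescue by a clever choice of $p^\nu\to p$.

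The same difficulty infects your (b-t): for fixed $p^\nu\to p$ with some component of $\Tilde{\rm ES}(p)$ negative, $W^\nu(p^\nu,g)\leq W(p^\nu,g)=\langle\Tilde{\rm ES}(p^\nu),g\rangle-{\rm const}$ is unbounded below over $\Gamma^\nu=[0,K^\nu]\cap\Gamma$ as $K^\nu\upto\infty$, so no fixed compact $K_\varepsilon$ can capture the infimum to within $\varepsilon$; the quadratic augmentation penalizes $z$, not $g$, and does nothing to confine near-minimizers in the $g$-variable. To make (b) and (b-t) work you would need either $r^\nu\downarrow 0$, so that $r^\nu\sigma^*(\cdot/r^\nu)$ epi-converges to the indicator of $\{0\}$ and the infimal convolution collapses back to $W$ (this is what the paper's epi-convergence remark is gesturing at, with the direction of $r^\nu$ confused), or some additional restriction on the $z$-infimum and on the admissible $p$. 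As written, your proposal reproduces the paper's structure but leaves its weakest link --- the lower lopsided inequality --- unproved, and your own closed-form route makes the obstruction explicit rather than resolving it.
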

		\begin{proof}
			Using Definition~\ref{lopdef} of lopsided convergence:
			(a) Let ${\tilde g}\in \Gamma$, and let $({\tilde p}^\nu\in \Gamma^\nu)$ with ${\tilde p}^\nu\to {\tilde p}\in \Gamma$. Take ${\tilde g}^\nu={\tilde g}$ (assuming $\tilde g \in \Gamma^\nu$ for $\nu$ large enough, or choose $g^\nu \to g$). Then, from the definition of $W^\nu$, $W^\nu({\tilde p}^\nu,{\tilde g}^\nu) \leq W({\tilde p}^\nu,{\tilde g}^\nu)$ (by choosing $z=0$).
			\[\limsup_\nu W^\nu({\tilde p}^\nu,{\tilde g}^\nu) \leq \limsup_\nu W({\tilde p}^\nu,{\tilde g}^\nu).\]
			By the upper semi-continuity of $W$ in its first argument (Theorem~\ref{theo:uscwalgei}), $\limsup_\nu W({\tilde p}^\nu,{\tilde g}^\nu) \leq W({\tilde p},{\tilde g})$. Thus, condition (a) holds.
			
			(b) Let ${\tilde p}\in \Gamma$. Choose ${\tilde p}^\nu = {\tilde p}$ (assuming $\tilde p \in \Gamma^\nu$ or project it). Let $({\tilde g}^\nu \in \Gamma^\nu)$ with ${\tilde g}^\nu \to {\tilde g}$.
			The functions $h^\nu(z) = W({\tilde p},{\tilde g}^\nu-z)+r^\nu\sigma ^*(r^{-\nu}z)$ epi-converge to $h(z) = W({\tilde p},{\tilde g}-z)$ if $r^\nu \sigma^*(r^{-\nu}z)$ epi-converges to $0$ for $z \neq 0$ and $\infty$ for $z=0$ (this is not quite right, it should be $I_{\{0\}}(z)$ if $r^\nu \to \infty$). More standard is that $W^\nu(\tilde p, \cdot)$ epi-converges to $W(\tilde p, \cdot)$ as $\nu \to \infty$.
			This implies $\liminf_\nu \inf_z h^\nu(z) \geq \inf_z h(z)$, which means $\liminf_\nu W^\nu({\tilde p},{\tilde g}^\nu) \geq W({\tilde p},{\tilde g})$.
			The ancillary-tightness (b-t) follows because $\Gamma^\nu$ are expanding compact sets, and the infimum over $\Gamma^\nu \cap K_\varepsilon$ can be made close to the infimum over $\Gamma^\nu$ by choosing $K_\varepsilon$ appropriately if $W^\nu(p^\nu, \cdot)$ does not escape to $-\infty$ too fast outside $K_\varepsilon$.
			The convergence of maxinf-points follows from general results on lopsided convergence, e.g., \cite[Cor. 3.7]{JoWe14motivating}.
		\end{proof}
		
		\begin{corollary}[Ky-Fan properties of the Walrasian]
			The Walrasian bifunction $W$ is a Ky-Fan bifunction. Moreover,
			the set of maxinf-points of $W$ is nonempty.
		\end{corollary}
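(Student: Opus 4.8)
The plan is to treat the two assertions separately, since both are now within reach of results already established. For the Ky--Fan property I would use the characterization recalled in the proof of Lemma~\ref{lemm:kyfan}: a bifunction $F:K\times K\to\reals$ with $K$ convex is a Ky--Fan bifunction provided $F(\cdot,y)$ is usc for each $y\in K$ and $F(x,\cdot)$ is convex for each $x\in K$. Here $K=\Gamma=\reals^{(1+L)(1+\Xi)-1}_{+}$ is convex (and closed). The usc of $W(\cdot,\tilde g)$ for fixed $\tilde g$ is precisely Theorem~\ref{theo:uscwalgei}. Convexity of $W(\tilde p,\cdot)$ for fixed $\tilde p$ is even more immediate from \eqref{walgei}: the penalty $\rho\|\sum_{i}(z^{+i}-z^{-i})\|_2^2$ depends on $\tilde p$ alone (through the agents' optimal portfolios), so $\tilde g\mapsto W(\tilde p,\tilde g)=\langle\tilde g,\widetilde{{\rm ES}}(\tilde p)\rangle+c(\tilde p)$ is affine, hence convex. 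This already gives that $W$ is a Ky--Fan bifunction on $\Gamma\times\Gamma$.

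For nonemptiness of $\nargmaxinf W$, I would route the argument through the approximating family rather than through Ky--Fan's theorem directly, since $\Gamma$ is unbounded and no compactness is available at the limit. By Lemma~\ref{lemm:kyfan}, for each $\nu$ there exists $\tilde p^{\nu}\in\nargmaxinf W^{\nu}\subset\Gamma^{\nu}\subset\Gamma$. Granting that the sequence $\{\tilde p^{\nu}\}$ has a cluster point $\tilde p$, closedness of $\Gamma$ gives $\tilde p\in\Gamma$, and then Theorem~\ref{geilopaw} --- whose hypothesis $W^{\nu}\lopto W$ is also discharged there --- yields $\tilde p\in\nargmaxinf W$, so the set is nonempty. (If in addition the maxinf-value at $\tilde p$ is nonnegative, Lemma~\ref{eqmaxinf} shows this point is an equilibrium, which is the use we have in mind.)

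The hard part is the step I slipped in above: showing that $\{\tilde p^{\nu}\}$ actually clusters, i.e.\ that the maxinf-points of the augmented Walrasians do not escape to infinity along the expanding sets $K^{\nu}$. I would establish this from an a priori bound on the normalized no-arbitrage price candidates, analogous to the compactness of the truncated consumption set invoked earlier (cf.\ \cite[Ch.\,5,Thm.\,1]{Kir98ge}): if some coordinate $\tilde p^{\nu}_{\xi,l}$ diverged, then, using the numéraire normalization $\tilde p_{0,0}=1$ together with indispensability (Assumption~\ref{ass:indisp}) and ample survivability (Assumption~\ref{ass:ample}) over the compact truncated allocation set, the excess-supply coordinate selected by the inner infimum could not remain nonnegative, contradicting the maxinf property of $\tilde p^{\nu}$ for large $\nu$. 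Turning this coercivity heuristic into a genuine estimate --- keeping careful track of the truncation parameters $m$ and $M$ and of the penalty term --- is where essentially all of the work sits; once it is in place, the two implications above close the proof of the corollary.
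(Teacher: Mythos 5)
Your argument splits into two halves of quite different status. For the Ky--Fan property itself, your direct verification --- usc of $W(\cdot,\tilde g)$ from Theorem~\ref{theo:uscwalgei}, and convexity of $W(\tilde p,\cdot)$ because \eqref{walgei} is affine in $\tilde g$ (the portfolio penalty depends on $\tilde p$ alone through the agents' optimal portfolios) --- is correct under the definition recalled in Lemma~\ref{lemm:kyfan}, and it is a genuinely different route from the paper's: the paper instead deduces the Ky--Fan property of $W$ from the fact that $W$ is the ancillary-tight lop-limit of the Ky--Fan family $\{W^\nu\}$ (Theorem~\ref{geilopaw}), invoking a preservation theorem for lop-limits \cite[Thm.~5.2]{JoWe09lop}. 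Your version is more elementary and self-contained, and avoids having to check the hypotheses of that preservation result; the paper's version keeps the argument uniform with the rest of the approximation machinery.

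For nonemptiness of $\nargmaxinf W$, you are right that Ky--Fan's inequality cannot be applied directly on the unbounded set $\Gamma=\reals^{(1+L)(1+\Xi)-1}_+$, and your route --- take $\tilde p^\nu\in\nargmaxinf W^\nu$ from Lemma~\ref{lemm:kyfan}, extract a cluster point, and apply the second assertion of Theorem~\ref{geilopaw} --- is the natural repair. But the step you yourself flag as ``the hard part'' is a genuine gap: nothing in your sketch shows that $\{\tilde p^\nu\}$ remains in a bounded region as $K^\nu\upto\infty$. The coercivity heuristic (a diverging coordinate $\tilde p^\nu_{\xi,l}$ forces a negative excess-supply coordinate detectable by the inner infimum) is plausible but is not an estimate; in particular the inner infimum of $W^\nu(\tilde p^\nu,\cdot)$ is taken over $\Gamma^\nu$, which also expands with $\nu$, so you must rule out that the penalized maxinf value stays competitive along a diverging price sequence. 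You should be aware, though, that the paper's own proof does not close this either: it conditions the conclusion on ``$\Gamma$ being compact, or $W$ having certain coercivity/boundedness properties,'' neither of which is established for the $\Gamma$ of Section~\ref{sec:geieq}. So your proposal matches the paper in substance, is more transparent about where the missing compactness argument lives, but does not supply it.
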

		\begin{proof}
			The bifunction $W$ is the lop-limit of the family of Ky-Fan functions $\{W^\nu\}_{\nu\in\nats}$ by Theorem~\ref{geilopaw}. By virtue of \cite[Thm. 5.2]{JoWe09lop}, $W$ is a Ky-Fan function (under suitable conditions, e.g., $\Gamma$ being compact, or $W$ having certain coercivity/boundedness properties). Therefore, $\nargmaxinf W\neq \emptyset$ by Ky-Fan's inequality or Sion's minimax theorem if $\Gamma$ is compact and $W$ satisfies convexity/concavity conditions.
		\end{proof}
		
		Note that the convergence result establishes the existence of maxinf-points of the Walrasian function, which represent the equilibrium conditions for the modified economy, as stipulated in Lemma~\ref{eqmaxinf}. The general conditions for the existence of equilibrium in the GEI model can be found in \cite{hart1975optimality}, but are beyond the scope of this article.
		
		Finally, we present a solution procedure for identifying GEI equilibrium points, as defined in Section~\ref{sec:geieq}. This procedure is rooted in the solution strategy for the agent's problem, as outlined in Section~\ref{sec:agprob}.
		
		We leverage the equivalence between GEI equilibrium and modified equilibrium, approximating the latter using Definition~\ref{def:eqmod} and applying Lemma~\ref{eqmaxinf} to represent equilibrium as maxinf-points of the associated Walrasian function $W$.
		
		To facilitate our approach, we introduce a family of approximating augmented Walrasian functions, denoted as $W^\nu$. We employ Theorem~\ref{geilopaw} to construct our algorithm, which entails computing a sequence of near-local saddle points (maxinf-points) of the family $\{W^\nu\}_{\nu\in\nats}$.
		
		\begin{itemize}
			\item At iteration $\nu+1$, given $(\tilde p^\nu,\tilde g^\nu)$, an augmenting parameter $r_{\nu+1} (\geq r_\nu)$, {\bf Phase I (or primal)} consists of solving
			\[\tilde g^{\nu+1} \in \argmin_{g\in \Gamma^\nu}W^{\nu+1}(\tilde p^\nu,g).\]
			Note that the 'internal' minimization is a convex optimization
			problem, which takes simpler forms depending on the selection
			of the augmenting function. In particular, it becomes the
			minimization problem of a linear form over a ball when $\sigma$
			is taken as a norm, or it could take the form of a quadratic
			minimization when $\sigma$ is the self-dual augmenting function.
			In these cases, the problem has an immediate solution.
			
			\item We define {\bf Phase II (or dual)} as finding
			\[\tilde p^{\nu+1} \in \argmax_{p\in \Gamma^\nu} W^{\nu+1}(p,\tilde g^{\nu+1}).\]
		\end{itemize}
		How to carry out this step will depend on the 'shape' and
		properties of the demand functions. For example, this turns out to
		be rather simple when the utility functions are of the Cobb-Douglas
		type, where smooth properties are transferred to the demand functions.
		
		By virtue of Theorem~\ref{geilopaw}, we know that as
		$r^\nu \upto r>0$ and $K^\nu\upto\infty$ (i.e., $\Gamma^\nu \to \Gamma$), $\tilde p^\nu \to \tilde p^*$,
		a maxinf-point of $W$, equivalently an equilibrium
		price system for the modified economy. The strategy for increasing
		$r^\nu$ should take into account (i) numerical stability, i.e., keep
		$r^\nu$ as small as possible, and (ii) efficiency, i.e., increase
		$r^\nu$ proportionately to the (reciprocal) of the imbalance of the market,
		to guarantee accelerated convergence.
		
		Finally, with the optimal prices for $\tilde\cE$, one can construct
		an equilibrium price for the original economy by following the
		computation described in \eqref{mktp} and \eqref{finq}.
		
		This new formulation highlights two major features. First, embedding the equilibrium problem into a family of perturbed optimization problems induces a notion of stability for each iteration, allowing the algorithm to perform its iterations robustly and without jumping too far from the current point. Secondly, the optimization nature of the maxinf problem opens a wide variety of well-known and
		developed computational libraries that can solve the corresponding
		optimization problem for each phase of the algorithm.
		
		\section{Numerical results}\label{geinum}
		
		\subsection*{Software implementation}
		
		The proposed algorithm was implemented using the Python programming language,
		along with the mathematical programming language Pyomo (Python Optimization Modeling Objects) \cite{pyomo,pyomo17}. The implemented solution addresses problems with the following features:
		
		Consider an initial price vector $\tilde p^0$ and the sequences
		of positive non-decreasing scalars $\{r^\nu\}$ and $\{K^\nu\}$ (defining $\Gamma^\nu$). Additionally,
		consider an augmenting function $\sigma$, which in this section is
		taken as $\sigma(x)=\frac{1}{2}|x|^2$, the self-dual function. Consider
		the sequence of associated augmented Walrasian functions, $\{W^\nu\}$,
		defined by \eqref{geiaw}. For a given tolerance level $\varepsilon$, the augmented
		Walrasian algorithm iterates as follows:
		\begin{description}
			\item[Step 0.] For a given price $\tilde p^\nu$, compute {\bf Phase I}, consisting of
			\[\tilde g^{\nu+1} \in \argmin_{g\in \Gamma^\nu} W^{\nu+1}(\tilde p^\nu,g),\]
			i.e., minimizing a quadratic objective function over the compact set $\Gamma^\nu$.
			This problem is solved using the Gurobi solver \cite{gurobi}, a state-of-the-art and efficient optimization software.
			
			\item[Step 1.] With the price $\tilde g^{\nu+1}$ obtained in Step 0,
			perform {\bf Phase II}:
			\[\tilde p^{\nu+1} \in \argmax_{p\in \Gamma^\nu} W^{\nu+1}(p,\tilde g^{\nu+1}).\]
			This is the critical step of the entire augmented Walrasian algorithmic
			framework. We need to overcome the (typical) lack of concavity of the
			objective function. Thus, the maximization is performed without considering
			first-order information, relying instead on the BOBYQA algorithm \cite{bobyqa}.
			BOBYQA performs a sequential local quadratic fit of the objective
			function over box constraints and solves it using a trust-region method.
			
			\item[Step 2.] If $\tilde p^{\nu+1}$ is not an $\varepsilon$-equilibrium for
			the economy $\tilde\cE$, then set $\tilde p^\nu\leftarrow \tilde p^{\nu+1}$,
			$r^\nu\leftarrow r^{\nu+1}$, $K^\nu \leftarrow K^{\nu+1}$ (update $\Gamma^\nu$), and go back
			to Step 0. Otherwise, stop: $\tilde p^{\nu+1}$ is an acceptable
			$\varepsilon$-equilibrium point for the modified economy. Then, compute $p_\varepsilon$
			and $q_\varepsilon$, the associated $\varepsilon$-equilibrium prices for the original
			equilibrium problem, following \eqref{finq} and \eqref{mktp}.
		\end{description}
		
		\state Remark.
		Notice that for each evaluation of the excess supply, it is necessary to
		solve the agent's maximization problem. For the examples covered in
		this section, the agents maximize a concave utility function over
		a linearly constrained set determined by budget constraints and
		non-negativity conditions. This problem is solved using the interior-point
		method implemented in Ipopt \cite{ipopt}, which yields satisfactory
		results for problems of this nature.
		
		All the examples were run on a Mac mini, 3 GHz Intel Core i5
		processor with 32 GB of RAM memory, running macOS Ventura 13.1.
		The corresponding routines and parameters are available in the official
		repository of the project\footnote{\href{https://github.com/jderide/GEI-awlrs}{\tt https://github.com/jderide/GEI-awlrs}},
		under GNU General Public License v3.0.
		
		\subsection{Brown, DeMarzo, Eaves example \cite{BrDeEa96gei}}\label{ssecbde}
		Our first example, presented in \cite{BrDeEa96gei}, is a two-stage stochastic exchange economy with
		an incomplete financial market (GEI), no retention, and no home production. The parameters are as follows: there are two types of agents, A and B, with twice as many B agents as A agents (i.e., $I=3$, representing agents A, B, B). There are two goods to be traded,
		$L=2$, and three possible states of nature for the second stage.
		
		The utility of each agent is given by the function
		\[U^i(c)=-\sum_{\xi=0}^\Xi \lambda_\xi^i \left(K-\prod_{l=1}^2 c_{l,\xi}^{\alpha_{i,l}}\right)^2,\]
		where $\xi=0$ represents the first stage, and $\lambda$ is a factor incorporating the
		probability distribution (in this example, $\lambda_{\xi}^i=(1,1/3,1/3,1/3)$ for every agent $i$ and every second-stage scenario $\xi$).
		Additionally, consider $K=5.7$, and the agent's parameters $\alpha$ and $e$ given by:
		\begin{align*}
			\alpha_A=(1/4,3/4),&\quad \alpha_B=(3/4,1/4),\\
			e_A=\left(\begin{array}{cccc}2&0.5&1&1.5\\
				2&1&1&1\end{array}\right),&\quad e_B=\left(\begin{array}{cccc}1&2.5&2&1.5\\
				1&2&2&2\end{array}\right).
		\end{align*}
		
		The financial market consists of two securities with the return matrix $D$ given by
		\[D_\xi=\left[\begin{array}{cc}1&0\\0&1\end{array}\right],\quad\xi=1,2,3.\]
		
		Note that for this market of securities, the no-arbitrage condition translates to
		the existence of positive multipliers $\sigma_\xi>0$ for $\xi=1,2,3$ such that
		\[\binom{q_0}{q_1}=\binom{\sum_{\xi=1}^3 \sigma_\xi p_{\xi,0}}{\sum_{\xi=1}^3 \sigma_\xi p_{\xi,1}} = \binom{\sum_{\xi=1}^3 \sigma_\xi \cdot 1}{\sum_{\xi=1}^3 \sigma_\xi p_{\xi,1}}.\]
		(Assuming $D_{\xi,0j}$ corresponds to $p_{\xi,0}$ and $D_{\xi,1j}$ to $p_{\xi,1}$ in the sum $\sigma_\xi D_\xi^\top p_\xi$. The original expression was $\sigma_1p_1(1)+\sigma_2p_1(2)+\sigma_3p_1(3)$ which is less general. If $p_{\xi,0}=1$, then $q_0 = \sum \sigma_\xi$. If the second security pays out good 1, then $q_1 = \sum \sigma_\xi p_{\xi,1}$.)
		
		An interesting feature of this economy arises from the structure of the
		first financial contract: since good 0 is the numéraire, this contract
		delivers one unit of this good in every possible state. Thus, it can be interpreted
		as a bond, and its price $q_0$ corresponds to its
		expected payoff (discounted by state prices), i.e., $q_0 = \sum_{\xi=1}^3 \sigma_\xi$. The interest rate for this economy can be computed as
		$r=(1/q_0)-1$.
		
		\subsubsection{Numerical Solution}
		
		Applying the augmented Walrasian algorithm to the
		auxiliary Walrasian function $\tilde W$ yielded an equilibrium point
		(to $10^{-2}$ precision). The initialization of the algorithm played an
		important role in the speed of convergence. The system
		of equilibrium prices for the auxiliary problem is given by
		\[{\tilde p}^*=\left[\begin{array}{c|ccc}
			1.00000&0.28871&0.31709&0.32718\\
			0.75482&0.22222&0.22864&0.21409
		\end{array}\right].\]
		
		From here, the equilibrium prices for the original economy are given by
		\[p^*=\left[\begin{array}{c|ccc}
			1.00000&1.00000&1.00000&1.00000\\
			0.75482&0.7697&0.7210&0.6544
		\end{array}\right],\quad
		q^*=\binom{0.932986}{0.664952},\]
		and using the calculation described previously, the equilibrium interest rate is ${\bf r} \approx 7.18\%$.
		
		Figure~\ref{fig:itbde} depicts the iterations performed by the algorithm.
		
		\begin{figure}[!hbt]
			\begin{center}
				\includegraphics[width=0.8\textwidth]{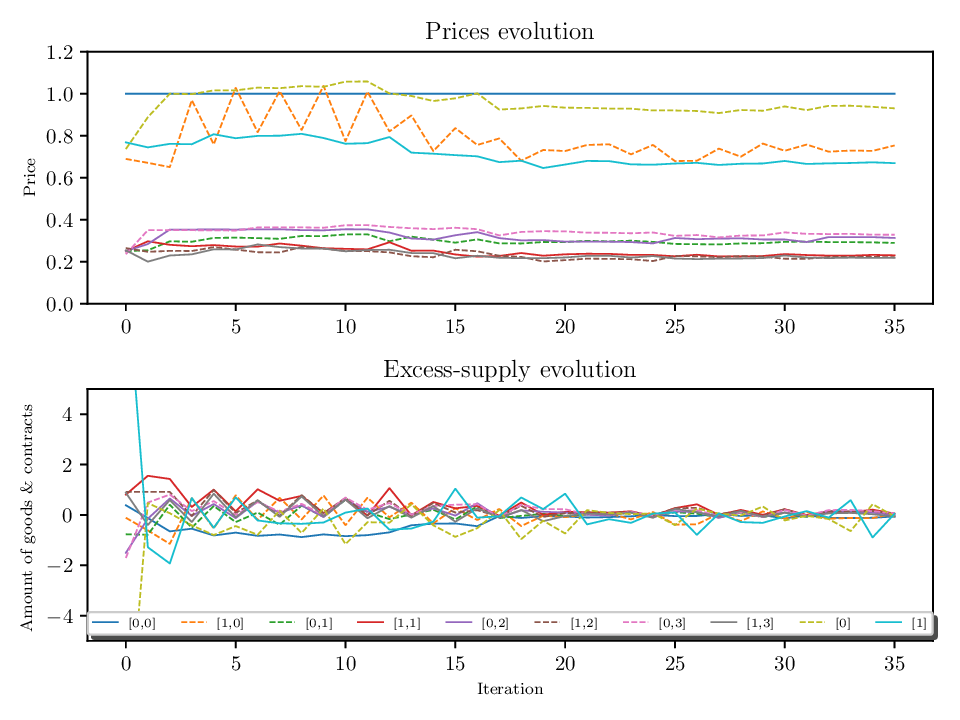}
				\caption{\small Augmented Walrasian iterations, evolution of prices ($\tilde p$, top) and Excess supply (${\rm ES}$, bottom)}
				\label{fig:itbde}
			\end{center}
		\end{figure}
		
		\subsubsection{Comparison of Numerical Results}
		In this section, we study the numerical results for the BDE
		example, comparing the original implementation in \cite{BrDeEa96gei},
		the modified implementation by \cite{Ma15compgei}, and our approach.
		
		Note first that allocations $c$, contracts $z$, and contract prices $q$ are
		derived from normalizing the prices $p$ reported in both papers (setting the price of the first good to 1) and solving the agent's problem with
		our implementation. A summary of the different equilibria is described
		in Table~\ref{tab:comp}.
		
		\begin{table}[h!]
			\begin{center}
				\begin{scriptsize}
					\begin{tabular}{ccccc|cccc|cccc}
						\toprule
						& \multicolumn{4}{c}{\textbf{Ma}\cite[Ex1]{Ma15compgei}} & \multicolumn{4}{c}{\textbf{BDE}\cite[5.1]{BrDeEa96gei}} & \multicolumn{4}{c}{\textbf{DW}} \\
						\textbf{c{[}i{]}{[}l,s{]}} & \textbf{Ag{[}0{]}} & \textbf{Ag{[}1,2{]}} & \textbf{ES} & \textbf{p} & \textbf{Ag{[}0{]}} & \textbf{Ag{[}1,2{]}} & \textbf{ES} & \textbf{p} & \textbf{Ag{[}0{]}} & \textbf{Ag{[}1,2{]}} & \textbf{ES} & \textbf{p} \\
						\midrule
						{[}0,0{]} & 0.600 & 1.690 & 0.019 & 1.0000 & 0.603 & 1.699 & -0.001 & 1.0000 & 0.605 & 1.726 & -0.058 & 1.0000 \\
						{[}1,0{]} & 2.464 & 0.771 & -0.006 & 0.7307 & 2.451 & 0.768 & 0.013 & 0.7375 & 2.406 & 0.762 & 0.069 & 0.7548 \\
						{[}0,1{]} & 0.766 & 2.369 & -0.003 & 0.2885 & 0.773 & 2.367 & -0.007 & 0.2890 & 0.758 & 2.365 & 0.011 & 0.2887 \\
						{[}1,1{]} & 2.981 & 1.024 & -0.030 & 0.2224 & 2.965 & 1.010 & 0.016 & 0.2259 & 2.956 & 1.024 & -0.004 & 0.2222 \\
						{[}0,2{]} & 0.724 & 2.135 & 0.006 & 0.3057 & 0.716 & 2.149 & -0.015 & 0.3056 & 0.744 & 2.109 & 0.038 & 0.3171 \\
						{[}1,2{]} & 3.039 & 0.996 & -0.031 & 0.2184 & 2.994 & 0.999 & 0.008 & 0.2193 & 3.094 & 0.975 & -0.044 & 0.2286 \\
						{[}0,3{]} & 0.680 & 1.903 & 0.014 & 0.3252 & 0.688 & 1.907 & -0.002 & 0.3256 & 0.677 & 1.897 & 0.028 & 0.3272 \\
						{[}1,3{]} & 3.100 & 0.965 & -0.030 & 0.2138 & 3.112 & 0.958 & -0.029 & 0.2159 & 3.105 & 0.966 & -0.038 & 0.2141 \\
						\midrule
						\textbf{z{[}i{]}{[}j{]}} & \textbf{Ag{[}0{]}} & \textbf{Ag{[}1,2{]}} & \textbf{EC} & \textbf{q} & \textbf{Ag{[}0{]}} & \textbf{Ag{[}1,2{]}} & \textbf{EC} & \textbf{q} & \textbf{Ag{[}0{]}} & \textbf{Ag{[}1,2{]}} & \textbf{EC} & \textbf{q} \\
						{[}0{]} & -6.588 & 3.239 & -0.110 & 0.9194 & -6.247 & 3.210 & 0.173 & 0.9203 & -6.304 & 3.162 & 0.020 & 0.9330 \\
						{[}1{]} & 10.872 & -5.348 & 0.177 & 0.6547 & 10.306 & -5.267 & -0.228 & 0.6611 & 10.481 & -5.259 & -0.037 & 0.6650\\
						\bottomrule
					\end{tabular}
				\end{scriptsize}
			\end{center}
			\caption{Comparison: Equilibrium allocations, financial contracts, excess supply, and prices}
			\label{tab:comp}
		\end{table}
		
		Rigorous metrics for comparing the application of these algorithms
		are beyond the scope of our exposition. Nevertheless, our implementation of
		the augmented Walrasian algorithm for this equilibrium problem performs
		robustly for several starting points. Moreover, as the underlying structure
		of the equilibrium problem solved incorporates the no-arbitrage condition,
		each iteration depicted in Figure~\ref{fig:itbde} provides a notion of stability.
		Additionally, no special attention needs to be paid to intermediate steps
		where the market becomes incomplete or the space spanned by the financial
		contracts at a particular price loses rank.
		
		\subsection{Two modifications to BDE example (Example~\ref{ssecbde}) by Schmedders \cite{Sch98compGEI}}
		Following Schmedders' modifications to the previous example,
		consider the following problems:
		\subsubsection{Endowments variation \cite[Ex.2,\S6.2]{Sch98compGEI}}\label{ssecsch1}
		The first modification to Example~\ref{ssecbde} considers the same
		economy as before. Besides a substitution in the initial endowment of agent A,
		\[e_A=\left(\begin{array}{cccc}2.0&0.8&1.0&0.6\\ 
			2.0&2.4&3.0&1.8\end{array}\right),\]
		the other parameters remain the same. Running these experiments, we found the
		equilibrium prices for the auxiliary problem are:
		\[{\tilde p}^*=\left(\begin{array}{cccc}1.0000&0.3230&0.2769&0.3928\\ 
			0.8013&0.2050&0.1640&0.1969\end{array}\right),\]
		and using the transformation in \eqref{mktp}, the equilibrium prices for the original
		economy are:
		\[p^*=\left(\begin{array}{cccc}1.0000&1.0000&1.0000&1.0000\\ 
			0.8013&0.6347&0.5921&0.5013\end{array}\right),\quad q^*=\binom{0.9928}{0.5659},\quad r^*\approx 0.73\%.\]
		Figure~\ref{fig:Sch1} depicts the iterations of the algorithm; our method avoids the numerical instabilities that naturally arise from homotopy-type algorithms.
		\begin{figure}[!hbt]
			\begin{center}
				\includegraphics[width=0.8\textwidth]{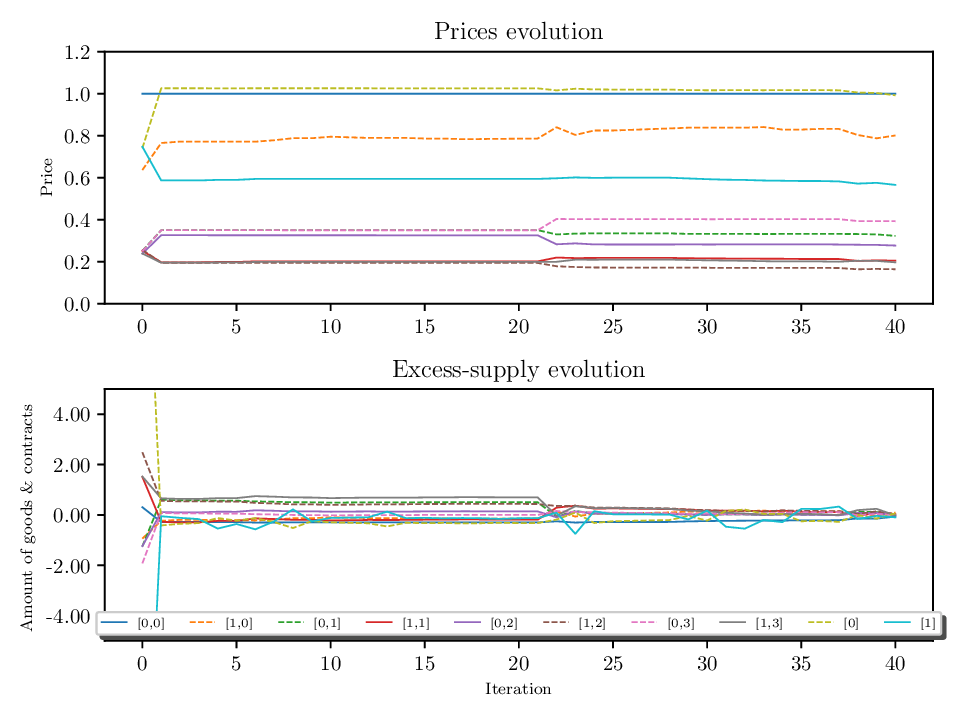}
				\caption{\small Example~\ref{ssecsch1}, evolution of prices ($\tilde p$, top) and Excess supply (${\rm ES}$, bottom)}
				\label{fig:Sch1}
			\end{center}
		\end{figure}
		
		\subsubsection{Homogeneous agents, no financial trade \cite[Ex.3,\S6.2]{Sch98compGEI}}\label{ssecsch2}
		This variation of Example~\ref{ssecbde} considers only two agents,
		where the special feature is the constant
		total initial endowment $\sum_{i=1}^I e^i_{l,\xi}=3$ for $\xi=0,\ldots,\Xi$ and $l=0,1$.
		By symmetry, this economy should have equilibrium prices that are identical.
		Our results are similar to those found in the original paper, where these
		optimal prices are called 'bad prices'. Figure~\ref{fig:Sch3}
		shows the behavior of the augmented Walrasian; it is easy to see the asymptotic
		convergence to the equilibrium prices.
		
		\begin{figure}[!hbt]
			\begin{center}
				\includegraphics[width=0.8\textwidth]{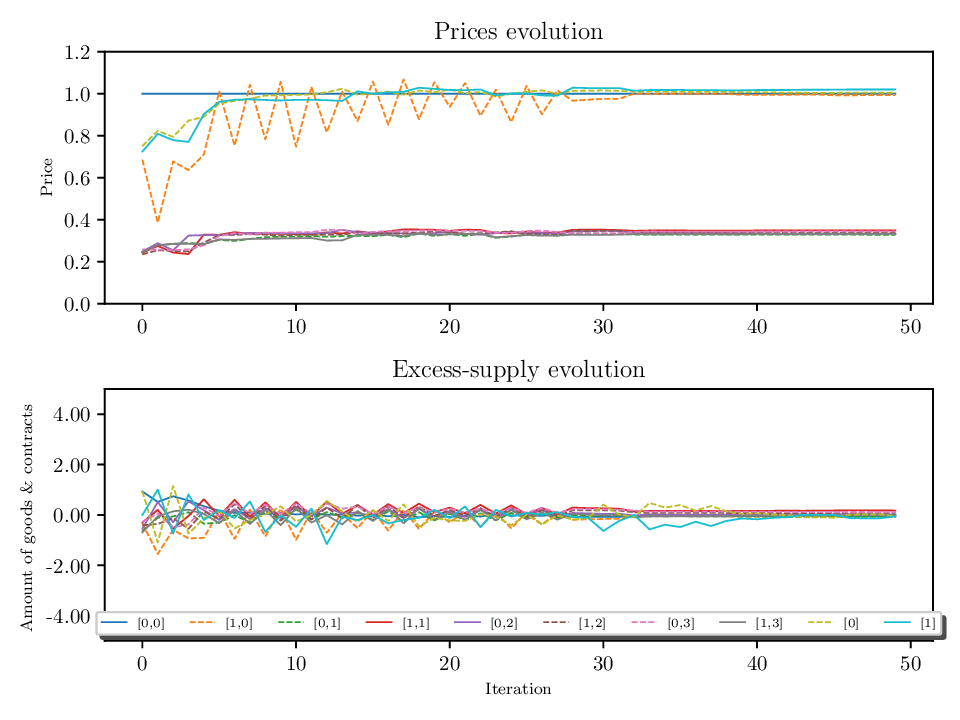}
				\caption{\small Example~\ref{ssecsch2}, evolution of prices ($\tilde p$, top) and Excess supply (${\rm ES}$, bottom)}
				\label{fig:Sch3}
			\end{center}
		\end{figure}
		
		\subsection{Heterogeneous agents, retention, and financial trade}\label{ExBig}
		In this example, we consider a larger economy: $5$ agents, $3$ possible second-stage scenarios, $6$ commodities, and $5$ financial contracts. Agents' utility functions are described by
		\[U^i(c,w)=-\sum_{\xi=0}^\Xi \lambda_\xi^i \left(K-\prod_{l=1}^L c_{l,\xi}^{\alpha^i_{l,\xi}} - \rho\prod_{l=1}^L w_{l,\xi}^{\alpha^i_{l,\xi}}\right)^2,\]
		where $\xi=0$ represents the first stage, and $\lambda$ is a factor incorporating the
		probability distribution (in this example, $\lambda_{\xi}^i=(1,1/3,1/3,1/3)$ for all $i, \xi$).
		Additionally, consider the following agent parameters: $K=5.7$, $\rho=0.01$. The utility coefficients are $\alpha^i_{l,\xi}=\alpha^i_l$ for $\xi=0,\ldots,3$ and are given in Table~\ref{tab:ExBigalpha}.
		\begin{table}[ht]
			\begin{center}
				\begin{tabular}{c|cccccc}
					\toprule
					$\alpha^i_l$&$l=1$&$l=2$&$l=3$&$l=4$&$l=5$&$l=6$\\
					\midrule
					$i=1$&0.25&0.25&0.17&0.17&0.08&0.08\\
					$i=2$&0.25&0.25&0.17&0.17&0.08&0.08\\
					$i=3$&0.17&0.17&0.25&0.25&0.08&0.08\\
					$i=4$&0.17&0.17&0.25&0.25&0.08&0.08\\
					$i=5$&0.08&0.08&0.08&0.08&0.5&0.17\\
					\bottomrule
				\end{tabular}
			\end{center}
			\caption{Coefficients for utility functions in Example~\ref{ExBig}.}
			\label{tab:ExBigalpha}
		\end{table}
		
		The initial endowments $e^i_{l,\xi}$ are taken as variations of a base vector $e^i_{l}$ with a stage-dependent random component, following this rule:
		\[e^i_{l,0}=e^i_{l},\quad \begin{cases} e^i_{l,1}=e^i_{l}+\varepsilon\\ e^i_{l,2}=e^i_{l}\\e^i_{l,3}=e^i_{l}-\varepsilon\end{cases},\]
		where $\varepsilon\sim{\rm U}(0,0.1)$, a uniformly distributed random variable supported in $[0,0.1]$, and the base vector is given by Table~\ref{tab:ExBige}.
		
		\begin{table}[ht]
			\begin{center}
				\begin{tabular}{c|cccccc}
					\toprule
					$e^i_l$&$l=1$&$l=2$&$l=3$&$l=4$&$l=5$&$l=6$\\
					\midrule
					$i=1$&1&4&2&2&3&1.5\\
					$i=2$&4&1&2&2&1&1.5\\
					$i=3$&2&1&4&3&1&1.5\\
					$i=4$&2&4&1&3&1&1.5\\
					$i=5$&2&2&2&2&2&2\\
					\bottomrule
				\end{tabular}
			\end{center}
			\caption{Base vector defining the initial endowments in Example~\ref{ExBig}.}
			\label{tab:ExBige}
		\end{table}
		
		In addition, the retention-related matrices are identity matrices, $\{A_\xi=\mathbf{I}:\xi=1,\ldots,\Xi\}$. The financial market is formed by forward contracts, given by
		\[D_0=\mathbf{0}_{6\times 5},\quad D_\xi=\left[\begin{array}{c}{\mathbf I}_5\\ \mathbf{0}_{1\times 5}\end{array}\right],\quad \xi=1,\ldots,\Xi,\]
		i.e., no transaction costs are incurred when issuing a contract, and each contract
		represents a return of one unit for each of the first $J$ goods in every possible second-stage scenario.
		Figure~\ref{fig:Ex4} depicts the trajectories of the prices (top) and excess supply (bottom) for each iteration of the augmented Walrasian algorithm. Note that this problem has a size of
		\[\underbrace{I}_{\rm Agents}\cdot \left(\underbrace{2\cdot (L+1)\cdot (\Xi+1)}_{\rm Consumption+retention}+\underbrace{(J+1)}_{\rm Contracts}\right)+\underbrace{(L+1)(\Xi+1)}_{\rm Prices-goods}+\underbrace{(J+1)}_{\rm Prices-contracts}=294\]
		variables, considering the decisions made by each agent (consumption, retention, and contracts at each stage) and the total number of prices to be determined.
		\begin{figure}[!hbt]
			\begin{center}
				\includegraphics[width=0.8\textwidth]{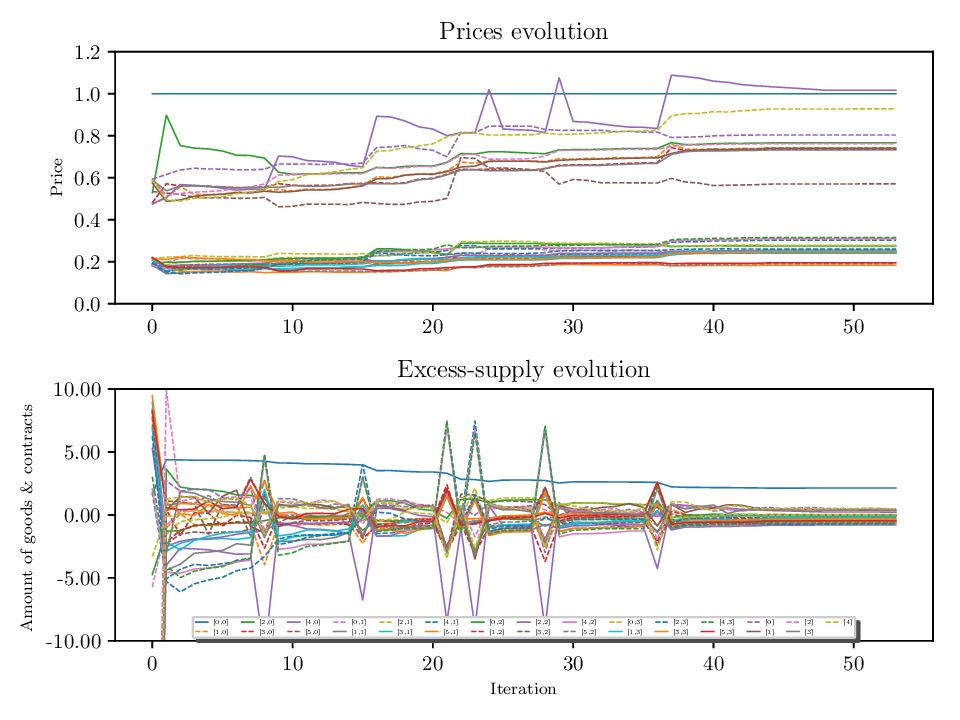}
				\caption{\small Example~\ref{ExBig}, evolution of prices ($\tilde p$, top) and Excess supply (${\rm ES}$, bottom)}
				\label{fig:Ex4}
			\end{center}
		\end{figure}
		
		The algorithm stopped after 54 iterations with a tolerance of $\varepsilon=0.1$, yielding the resulting price vectors:
		\[p^*=\left[\begin{array}{cccc}
			1&1&1&1\\
			0.7421&0.9681&0.9003&0.8995\\
			0.7666&0.9913&0.9261&0.9334\\
			0.7355&0.9585&0.8947&0.8856\\
			1.0166&1.2086&1.1287&1.1326\\
			0.5706&0.7317&0.6928&0.7007
		\end{array}\right],\quad q^*=\left[\begin{array}{c}
			0.8035\\
			0.7401\\
			0.7625\\
			0.7323\\
			0.9280
		\end{array}\right].\]
		
		\state Acknowledgement. I am deeply grateful to the late Roger J-B Wets for his invaluable contributions to this article. His expertise and insightful suggestions were instrumental in shaping this work, and I feel privileged to have had the opportunity to learn from him. Though he is no longer with us, his intellectual legacy continues to inspire my research.

\bibliographystyle{abbrv}
\bibliography{Deride-GEI}

\end{document}